\newtheorem{theorem}{Theorem}[section]
\newtheorem{lemma}[theorem]{Lemma}
\newtheorem{prop}[theorem]{Proposition}
\newtheorem{corollary}[theorem]{Corollary}
\theoremstyle{definition}
\newtheorem{remark}[theorem]{Remark}
\newtheorem{example}[theorem]{Example}
\newtheorem{definition}[theorem]{Definition}
\newtheorem{question}[theorem]{Question}
\newcommand{\BBB}{\mathcal{B}}
\newcommand{\UU}{\mathcal{U}}
\newcommand{\CC}{\mathbb{C}}
\newcommand{\FF}{\mathbb{F}}
\newcommand{\KK}{\mathbb{K}}
\newcommand{\NN}{\mathbb{N}}
\newcommand{\ZZ}{\mathbb{Z}}
\newcommand{\g}{\mathfrak{g}}
\newcommand{\h}{\mathfrak{h}}
\newcommand{\n}{\mathfrak{n}}
\newcommand{\sll}{\mathfrak{sl}}
\newcommand{\G}{\mathfrak{G}}
\newcommand{\U}{\mathfrak{U}}
\DeclareMathOperator{\ad}{ad}
\DeclareMathOperator{\Ad}{Ad}
\DeclareMathOperator{\Aut}{Aut}
\DeclareMathOperator{\charact}{char}
\DeclareMathOperator{\End}{End}
\DeclareMathOperator{\Ga}{\mathbb{G}_a}
\DeclareMathOperator{\Gr}{Gr}
\DeclareMathOperator{\height}{ht}
\DeclareMathOperator{\SL}{SL}
\DeclareMathOperator{\SSL}{E}
\DeclareMathOperator{\Zalg}{{\mathbb Z}-alg}
\newcommand{\inv}{^{-1}}
\newcommand{\co}{\colon\thinspace}
\DeclareMathOperator{\GL}{GL}
\begin{document}

\title{Minicourse on Kac--Moody groups}

\author{Timoth\'{e}e \textsc{Marquis}$^*$}

\thanks{$^*$F.R.S.-FNRS Research associate; supported in part by the FWO and the F.R.S.-FNRS under the EOS programme (project ID 40007542).}

\begin{abstract}
These are informal lecture notes for a three-hour minicourse on Kac--Moody groups, given at the workshop ``Kac--Moody geometry'' in July 2023 in Kiel. They provide a concise overview of the book \emph{An introduction to Kac--Moody groups over fields}, EMS Textbooks in Mathematics (2018). They assume a previous familiarity with the (very) basics of Kac--Moody algebras. For readers unfamiliar with the latter topic, short ``Prerequisites'' notes (referenced within the text) are also freely available.
\end{abstract}

\maketitle

All results mentioned in these notes are contained in the \href{http://dx.doi.org/10.4171/187}{book} ``An introduction to Kac--Moody groups over fields''. So as to lighten the presentation, no bibliographical details are provided within the text. Instead, attributions of all mentioned results can be found at the end of Chapters 6, 7 and 8, and in Chapter 9 of the book. For readers unfamiliar with Kac--Moody algebras (and/or Coxeter groups, buildings and BN-pairs), short ``Prerequisites'' notes are also available \href{https://perso.uclouvain.be/timothee.marquis/papers/PrerequisitesKM.pdf}{here}.

\section{Setting}

\begin{itemize}
\item Let $A=(a_{ij})_{i,j\in I}$ be a generalised Cartan matrix (GCM).
\item Let $(\h,\Pi,\Pi^{\vee})$ be a realisation of $A$: $\h$ is a complex vector space with $\dim_{\CC}\h=|I|+\mathrm{corank}(A)$ and $\Pi=\{\alpha_i \ | \ i\in I\}$ and $\Pi^{\vee}=\{\alpha_i^{\vee} \ | \ i\in I\}$ are linearly independant subsets of $\h^*$ and $\h$ respectively such that $\langle\alpha_j,\alpha_i^{\vee}\rangle=a_{ij}$ for all $i,j\in I$.
\item Let $\g(A)=\langle \textrm{$e_i,f_i$ ($i\in I$), $\h$} \ | \ \textrm{(RA1)--(RA4)} \rangle$ be the Kac-Moody algebra of type $A$, where
\begin{itemize}
\item[(RA1)]
$[h,h']=0$ for all $h,h'\in\h$;
\item[(RA2)]
$[h,e_i]=\alpha_i(h)e_i$ and $[h,f_i]=-\alpha_i(h)f_i$ for all $h\in\h$ and $i\in I$;
\item[(RA3)]
$[f_i,e_j]=\delta_{ij}\alpha_i^{\vee}$ for all $i,j\in I$;
\item[(RA4)]
$(\ad e_i)^{|a_{ij}|+1}e_j=0=(\ad f_i)^{|a_{ij}|+1}f_j$ for all $i,j\in I$ with $i\neq j$.
\end{itemize}
\item We have a root space decomposition
$$\g(A)=\h\oplus\bigoplus_{\alpha\in\Delta}\g_{\alpha}\quad\textrm{where $\g_{\alpha}=\{x\in\g(A) \ | \ [h,x]=\alpha(h)x \ \forall h\in\h\}$}$$
and a triangular decomposition
$$\g(A)=\n^-\oplus\h\oplus\n^+$$
where $\n^+$ (resp. $\n^-$) is the subalgebra generated by the $e_i$ (resp. $f_i$), $i\in I$. We also consider the derived Kac--Moody algebra 
$$\g_A=\n^-\oplus\h'\oplus\n^+\quad\textrm{where $\h'=\sum_{i\in I}\CC\alpha_i^{\vee}\subseteq\h$.}$$
\item
The root system $\Delta=\{\alpha\in\h^*\setminus\{0\} \ | \ \g_{\alpha}\neq\{0\}\}$ decomposes into positive/negative roots:
$$\Delta=\Delta^+\sqcup \Delta^-\quad\textrm{where}\quad \Delta^{\pm}=\{\alpha=\pm\sum_{i\in I}n_i\alpha_i\in\Delta \ | \ n_i\in\NN\}.$$
For $\alpha$ as above, we write $\height(\alpha)=\pm\sum_{i\in I}n_i\in\ZZ$ for its height.
\item
The Weyl group 
$$W=\langle s_i\co \alpha\mapsto \alpha-\langle\alpha,\alpha_i^{\vee}\rangle \alpha_i \ | \ i\in I\rangle\leq\GL(\h^*)$$
of $A$ is a Coxeter group, with set of simple reflections $S=\{s_i \ | \ i\in I\}$.
\item
$\Delta$ also splits into the sets of real/imaginary roots:
$$\Delta=\Delta^{re}\sqcup\Delta^{im}\quad\textrm{where $\Delta^{re}=W\Pi$ and $\Delta^{im}=\Delta\setminus\Delta^{re}$}$$
and we set $\Delta^{re\pm}=\Delta^{re}\cap\Delta^{\pm}$ and $\Delta^{im\pm}=\Delta^{im}\cap\Delta^{\pm}$.
\end{itemize}

\begin{example}\label{example:gA}
\begin{enumerate}
\item
If $A=\begin{psmallmatrix}2&-1\\ -1&2\end{psmallmatrix}$, then $\g_A=\g(A)\cong\sll_3(\CC)$.
\item
If $A=\begin{psmallmatrix}2&-2\\ -2&2 \end{psmallmatrix}$ with $I=\{0,1\}$, then $\g_A\cong\sll_2(\CC[t,t\inv])\rtimes \CC K$ is a one-dimensional (nontrivial) central extension of $\sll_2(\CC[t,t\inv])$, with 
$$e_1=\begin{psmallmatrix}0&1\\ 0&0 \end{psmallmatrix}, \quad f_1=\begin{psmallmatrix}0&0\\ -1&0 \end{psmallmatrix}, \quad \alpha_1^{\vee}=\begin{psmallmatrix}1&0\\ 0&-1 \end{psmallmatrix}$$
and 
$$e_0=\begin{psmallmatrix}0&0\\ -t&0 \end{psmallmatrix}, \quad f_0=\begin{psmallmatrix}0&t\inv\\ 0&0 \end{psmallmatrix}, \quad \alpha_0^{\vee}=-\alpha_1^{\vee}+K.$$
\end{enumerate}
\end{example}

We conclude this section with the Gabber--Kac theorem, justifying why Kac--Moody algebras are generalisations of the \emph{simple} finite-dimensional complex Lie algebras, at least when $A$ is symmetrisable (i.e. $A=DB$ with $D$ a diagonal and $B$ a symmetric matrix).

\begin{theorem}[Gabber--Kac]\label{thm:GKac}
If $A$ is an indecomposable symmetrisable GCM, then $\g_A$ is simple modulo its center $\mathcal Z(\g_A)\subseteq\h'$.
\end{theorem}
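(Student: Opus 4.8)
The plan is to reduce the statement to a purely Lie-algebraic fact about the structure of $\g_A$ and to isolate the one genuinely hard input, namely the Gabber--Kac vanishing theorem that controls the ``upper'' radical. First I would set up the abstract framework: a non-zero $\h'$-stable ideal $\mathfrak{i}\subseteq\g_A$ must be a sum of root spaces and intersections with $\h'$, i.e. $\mathfrak{i}=(\mathfrak{i}\cap\n^-)\oplus(\mathfrak{i}\cap\h')\oplus(\mathfrak{i}\cap\n^+)$, because $\h'$ acts diagonalisably with the root spaces as weight spaces (here one uses that $\Delta$ together with $0$ are the $\h$-weights, and that the $\alpha_i^\vee$ separate enough of them; for the imaginary roots whose restriction to $\h'$ might coincide with $0$ one argues weight-space by weight-space using $[\h,\cdot\,]$ on all of $\h$). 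The goal is then to show any such $\mathfrak{i}$ contains a nonzero element of $\h'$, hence contains some $\alpha_i^\vee$, hence (by indecomposability and the relations (RA2)--(RA3), propagating through the Weyl group / the connectedness of the Dynkin diagram) contains all $e_j,f_j$, so equals $\g_A$; and conversely that the only ideal entirely inside $\h'$ is contained in $\mathcal Z(\g_A)$.

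The core of the argument is the first reduction: show that a nonzero $\h'$-ideal $\mathfrak{i}$ meets $\h'$ nontrivially. Suppose not. Then $\mathfrak{i}\cap\h'=0$, so $\mathfrak{i}=\mathfrak{i}^-\oplus\mathfrak{i}^+$ with $\mathfrak{i}^\pm=\mathfrak{i}\cap\n^\pm$ both nonzero (if, say, $\mathfrak{i}^+=0$ then $\mathfrak{i}\subseteq\n^-$, but then $[e_i,\mathfrak{i}]\subseteq\mathfrak{i}$ would force, taking an element of minimal height, a bracket landing in $\h'$, contradiction — so both are nonzero). Now the symmetrisability of $A$ enters: it gives an invariant symmetric bilinear form $(\cdot|\cdot)$ on $\g(A)$ whose restriction to $\h$ is nondegenerate and which pairs $\g_\alpha$ nondegenerately with $\g_{-\alpha}$. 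Using invariance, $[\mathfrak{i}^+,\mathfrak{i}^-]\subseteq\mathfrak{i}\cap\h'=0$; but pick a lowest-height nonzero homogeneous $x\in\mathfrak{i}^+$, say $x\in\g_\beta$ with $\beta\in\Delta^+$, and a matching $y\in\g_{-\beta}$ with $(x|y)\neq 0$ — one must then manufacture, from $y$, an element of $\mathfrak{i}^-$ pairing nontrivially with $x$, which forces $[x,\cdot\,]$ to hit a nonzero element of $\h'$. This last manoeuvre is exactly where the \textbf{Gabber--Kac vanishing theorem} is needed: the statement that the maximal ideal of the free Lie algebra defining $\n^+$ (equivalently, the maximal $\g_A$-submodule of the ``Verma-like'' object) intersects trivially the part of positive height, which is what guarantees $\n^+$ has no proper nonzero ideal stable under the relevant operators and hence that $\mathfrak{i}^+$ cannot be ``too small''. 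I would invoke this as a black box.

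The remaining steps are comparatively routine. Once $\mathfrak{i}\cap\h'\neq 0$, pick $0\neq h\in\mathfrak{i}\cap\h'$; since the $\alpha_i$ span $(\h')^*$ (as $\Pi$ is linearly independent and $\h'=\sum\CC\alpha_i^\vee$), some $\alpha_i(h)\neq 0$, so $[h,e_i]=\alpha_i(h)e_i\in\mathfrak{i}$ gives $e_i\in\mathfrak{i}$, whence $f_i=-[\,[f_i,e_i],f_i]/\alpha_i(\alpha_i^\vee)$-type computations (using $[f_i,e_i]=\alpha_i^\vee$ and $\alpha_i(\alpha_i^\vee)=a_{ii}=2$) put $f_i,\alpha_i^\vee\in\mathfrak{i}$ too. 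Then indecomposability of $A$: for $j$ adjacent to $i$ (i.e. $a_{ij}\neq 0$), $[\,e_j,[f_i,e_i]\,]=a_{ij}e_j\in\mathfrak{i}$ forces $e_j\in\mathfrak{i}$, and by connectivity of the diagram all $e_j,f_j\in\mathfrak{i}$, so $\mathfrak{i}\supseteq\langle e_j,f_j\mid j\in I\rangle=\n^-+\n^++\h'=\g_A$ (the last equality because $[e_i,f_i]=\alpha_i^\vee$ generate $\h'$). For the converse inclusion, if $\mathfrak{i}\subseteq\h'$ is an ideal then $[\h',\mathfrak{i}]=0$ is automatic, but $[e_i,\mathfrak{i}]\subseteq\g_{\alpha_i}\cap\mathfrak{i}=0$ forces $\alpha_i(h)=0$ for all $i$ and all $h\in\mathfrak{i}$, i.e. $\mathfrak{i}\subseteq\bigcap_i\ker\alpha_i|_{\h'}=\mathcal Z(\g_A)$. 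Combining: the only ideals of $\g_A$ not contained in $\mathcal Z(\g_A)$ is $\g_A$ itself, which is precisely simplicity modulo the centre. The main obstacle — and the only non-formal point — is the Gabber--Kac vanishing step; everything else is bookkeeping with the defining relations and the root space decomposition.
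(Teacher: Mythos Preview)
The paper does not give a proof of this theorem; it is stated as a classical result and attributed (via the name and the end-of-chapter bibliographical remarks in the book) to Gabber and Kac. So there is no ``paper's own proof'' to compare against.

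On the substance of your outline: the architecture is right --- reduce to ``any nonzero ideal meets $\h'$'', then propagate via indecomposability --- and the routine parts (the last two paragraphs) are fine. The problem is that the black box you invoke is the theorem itself. In the paper's conventions $\g(A)$ is \emph{presented} by the Serre relations (RA1)--(RA4); the hard content of Gabber--Kac is precisely that this Serre presentation agrees with the quotient of $\tilde\g(A)$ by its unique maximal ideal trivially intersecting $\h$, i.e.\ that $\g(A)$ has no nonzero ideal with $\mathfrak{i}\cap\h=0$. Your ``Gabber--Kac vanishing'' black box is exactly this statement, and your bookkeeping is then just the (genuinely easy) passage from ``no $\h$-trivial ideal'' to ``simple modulo center''. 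So the proposal amounts to ``Gabber--Kac (hard form) $\Rightarrow$ Gabber--Kac (simplicity form)'', which is correct but circular as a proof.

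Two smaller points. First, your description of the vanishing step is garbled: the maximal ideal $\mathfrak{r}\subseteq\tilde\g(A)$ certainly \emph{does} meet the positive-height part (the Serre relators live there); the theorem says $\mathfrak{r}$ is \emph{generated} by those relators, which is what the symmetrisability (via the invariant form, the Casimir, and an analysis of integrable highest-weight modules) buys. The invariant-form paragraph you wrote is not a separate manoeuvre layered on top of the vanishing --- it is the mechanism by which the vanishing itself is proved. Second, the $\h$ versus $\h'$ issue you flag is real: an ideal of $\g_A$ is a priori only $\h'$-stable, and distinct roots can restrict equally to $\h'$; the clean way around this is to work in $\g(A)$ and deduce the statement for $\g_A$ at the end, rather than patching weight-space arguments inside $\g_A$.
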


\section{The world of Kac--Moody algebras}\label{section:KMA}
Call an (indecomposable) GCM $A$ of \emph{finite type} if $A=DB$ with $B$ positive definite, of \emph{affine type} if $A=DB$ with $B$ positive semidefinite and of corank $1$, and of \emph{indefinite type} otherwise.

\begin{center}
{\tabulinesep=2mm
\begin{tabu}{|m{4cm}|m{3cm}|m{4cm}|m{4cm}|}
\hline
&Finite type&Affine type&Indefinite type\\ \hline
$\dim\g_A$ &$<\infty$&$\infty$&$\infty$\\ \hline
growth as $n\to\infty$ of $\dim\bigoplus_{0<\height(\alpha)<n}\g_{\alpha}$&constant&polynomial&exponential\\ \hline
Known ``realisations''&e.g. $\sll_n(\CC)$& e.g. $\sll_n(\CC[t,t\inv])$ & ???\\ \hline
Coxeter group $W$& finite (spherical geometry)&affine (Euclidean geometry)& indefinite for $|I|\geq 3$ (hyperbolic-like geometry)\\ \hline
Roots &$\Delta^{re}=\Delta$ &$\Delta^{re}\subsetneq\Delta$, $\Delta^{im}=\ZZ_{\neq 0}\delta$& $\Delta^{re}\subsetneq\Delta$, $\Delta^{im}$ ``big''\\ \hline
$\g_{\alpha}$, $\alpha\in\Delta^{re}$&\multicolumn{3}{c|}{$\dim\g_{\alpha}=1$: choose\footnote{There is a canonical choice of $e_{\alpha}\in\g_{\alpha}$ (up to sign), defined using $W$ as in the ``Prerequisites'' notes.} $e_{\alpha}$ such that $\g_{\alpha}=\CC e_{\alpha}$}\\ \hline
$\g_{\alpha}$, $\alpha\in\Delta^{im}$&$\varnothing$&$\sup_{\alpha\in\Delta^{im}}\dim\g_{\alpha}<\infty$&$\sup_{\alpha\in\Delta^{im}}\dim\g_{\alpha}=\infty$\\ \hline
\end{tabu}}
\end{center}

\begin{example}\label{example:A1tilde1}
In the notations of Example~\ref{example:gA}(2): $\Delta^{im}=\ZZ_{\neq 0}\delta$ with $\delta=\alpha_0+\alpha_1$, and $\g_{n\delta}=\CC\begin{psmallmatrix}t^n&0\\0&-t^n\end{psmallmatrix}$. Similarly, $\Delta^{re}=\{n\delta\pm\alpha_1 \ | \ n\in\ZZ\}$ with $e_{n\delta+\alpha_1}=\begin{psmallmatrix}0&t^n\\0&0\end{psmallmatrix}$ and $e_{n\delta-\alpha_1}=\begin{psmallmatrix}0&0\\-t^n&0\end{psmallmatrix}$.
\end{example}

\begin{remark}\label{remark:locnilp}
Here is another key difference between real and imaginary roots: if $x\in\g_{\alpha}$ is nonzero, then the endomorphism $\ad x\in\End(\g_A)$ is \emph{locally nilpotent} (i.e. for all $y\in\g_A$ there exists $n=n(y)\in\NN$ such that $(\ad x)^ny=0$) if and only if $\alpha\in\Delta^{re}$.
\end{remark}

\section{Kac--Moody groups}\label{section:KMG}

There are many objects deserving the name of Kac--Moody group; the most flexible ``definition'' of a Kac--Moody group is then as follows.
\begin{definition}\label{definition:KMG}
A {\bf Kac--Moody group} is a group attached to a Kac--Moody algebra. More precisely, in order for a group $G$ to deserve the name of Kac--Moody group of type $A$, it should  have an adjoint action $\Ad\co G\to\Aut(\g_A)$ on $\g_A$ (or some variation of $\g_A$, such as a completion) with small, central kernel.
\end{definition}

The most obvious way to construct a Kac--Moody group would then be to exponentiate the adjoint representation $\ad\co\g_A\to\End(\g_A)$. On the other hand, if $\alpha\in\Delta$ and $x\in\g_{\alpha}$ is nonzero, then the map
$$\g_A\to\g_A:y\mapsto (\exp\ad x)y:=\sum_{n\geq 0}\tfrac{1}{n!}(\ad x)^ny$$
is a well-defined automorphism of $\g_A$ if and only if the above sum is always finite, which happens precisely when $\alpha\in\Delta^{re}$ by Remark~\ref{remark:locnilp}. This leads us to define the group
$$G_A^{\ad}(\CC):=\langle \exp\ad x \ | \ x\in\g_{\alpha}, \ \alpha\in\Delta^{re}\rangle=\langle x_{\alpha}(r)=\exp\ad re_{\alpha} \ | \ r\in\CC, \ \alpha\in\Delta^{re}\rangle\leq\Aut(\g_A),$$
which one can call a {\bf minimal\footnote{here, ``minimal'' refers to the fact that we only exponentiate the real root spaces and not the imaginary root spaces.} (adjoint) Kac--Moody group over $\CC$}.

\begin{example}
In the notations of Example~\ref{example:A1tilde1}, we have $G_A^{\ad}(\CC)=\mathrm{PSL}_2(\CC[t,t\inv])$ and $x_{\alpha}(r)=I+re_{\alpha}\in G_A^{\ad}(\CC)$ for all $\alpha\in\Delta^{re}$.
\end{example}

\begin{remark}
If $h\in\h'$ and $y\in\g_{\beta}$, then $(\exp \ad h)y=e^{\beta(h)}y$ also makes sense, and we can thus also exponentiate the adjoint action of $\h'$, to get a {\bf torus}
$$T:=\langle e^{\ad h}=\exp\ad h \ | \ h\in\h'\rangle\leq\Aut(\g_A).$$
But as we will see, $T$ is in fact already contained in $G_A^{\ad}(\CC)$.
\end{remark}

\begin{remark}
Let $\lambda\in\h^*$ be dominant integral, i.e. such that $\lambda(\alpha_i^{\vee})\in\NN$ for all $i\in I$. Then the irreducible highest-weight $\g_A$-module $L(\lambda)$ with highest weight $\lambda$ is integrable: the representation $\pi_{\lambda}\co\g_A\to\End(L(\lambda))$ is such that each $\pi_{\lambda}(e_{\alpha})$ ($\alpha\in\Delta^{re}$) is locally nilpotent. One can thus define in a same the minimal Kac--Moody group
$$G_A^{\pi_{\lambda}}(\CC):=\langle \exp\pi_{\lambda}(x) \ | \ x\in\g_{\alpha}, \ \alpha\in\Delta^{re}\rangle\leq\Aut(L(\lambda)).$$
\end{remark}

Since finite-dimensional simple Lie algebras yield groups that can be defined over any field or even (commutative, associative, unital) ring $k$, such as $\SL_n(k)$, we would now like to define Kac--Moody groups over arbitrary fields or even rings. Moreover, we would like to have an intrinsic definition of a Kac--Moody group that does not depend on an ambiant space such as $\Aut(\g_A)$ or $\Aut(L(\lambda))$. In particular, we would like to understand how the various groups $G_A^{\ad}(\CC)$ and $G_A^{\pi_{\lambda}}(\CC)$ constructed above compare to each other.

Note that $G_A^{\ad}(\CC)$ (and $G_A^{\pi_{\lambda}}(\CC)$) is generated by a torus $T$ and by copies $U_{\alpha}:=x_{\alpha}(\CC)\cong (\CC,+)$ of the additive group of $\CC$ for each $\alpha\in\Delta^{re}$. One could then define a Kac--Moody group over $k$ as a free product of groups $\U_{\alpha}(k)\cong (k,+)$ for $\alpha\in\Delta^{re}$ (we then denote by $$x_{\alpha}\co k\to \U_{\alpha}(k):r\mapsto x_{\alpha}(r)$$ the corresponding isomorphism) and of a torus $T_k\cong (k^{\times})^{|I|}$ (we then write $$T_k=\langle r^{\alpha_i^{\vee}} \ | \ i\in I\rangle\cong (k^{\times})^{|I|},$$ with an injective group morphism $k^{\times}\to T_k:r\mapsto r^{\alpha_i^{\vee}}$ for each $i\in I$), which we quotient out by all the relations we observe between these generators inside $\Aut(\g_A)$ and $\Aut(L(\lambda))$ (at least over $\CC$), with the hope to find sufficiently many such relations to get back the groups $G_A^{\ad}(\CC)$ and $G_A^{\pi_{\lambda}}(\CC)$ for $k=\CC$.

\begin{definition}\label{definition:constructiveTF}
Let $k$ be a ring. Define the group $\G_A(k)=T_k*(\ast_{\alpha\in\Delta^{re}}\U_{\alpha}(k))/\textrm{(R0)--(R4)}$ where the relations (R0)--(R4) are as follows. For each $i\in I$ and $r\in k^{\times}$, set $\widetilde{s}_i(r):=x_{\alpha_i}(r)x_{-\alpha_i}(r\inv)x_{\alpha_i}(r)$ and $\widetilde{s}_i:=\widetilde{s}_i(1)$. For all $\alpha,\beta\in\Delta^{re}$, fix a total order on $]\alpha,\beta[_{\NN}:=(\NN_{>0}\alpha+\NN_{>0}\beta)\cap\Delta$.
\begin{enumerate}
\item[(R0)] $[x_{\alpha}(t),x_{\beta}(u)]=\prod_{\gamma=i\alpha+j\beta\in ]\alpha,\beta[_{\NN}}x_{\gamma}(C^{\alpha\beta}_{ij}t^iu^j)$ for all prenilpotent pairs $\{\alpha,\beta\}\subseteq\Delta^{re}$, where the $C^{\alpha\beta}_{ij}$ are given integers; 
\item[(R1)]
$r^{\alpha_i^{\vee}}x_{\alpha_j}(t)r^{-\alpha_i^{\vee}}=x_{\alpha_j}(r^{a_{ij}}t)$ for all $r\in k^{\times}$, $i,j\in I$ and $t\in k$;
\item[(R2)]
$\widetilde{s}_ir^{\alpha_j^{\vee}}\widetilde{s}_i\inv=r^{s_i\alpha_j^{\vee}}$ for all $r\in k^{\times}$ and $i,j\in I$;
\item[(R3)]
$r^{\alpha_i^{\vee}}=\widetilde{s}_i\inv\widetilde{s}_i(r\inv)$ for all $r\in k^{\times}$  and $i\in I$;
\item[(R4)]
$\widetilde{s}_ix_{\gamma}(t)\widetilde{s_i}\inv=x_{s_i\gamma}(t)$ for all $i\in I$, $t\in k$ and $\gamma\in\Delta^{re}$.
\end{enumerate}
The group functor $\G_A\co\Zalg\to\Gr$ is called the {\bf constructive Tits functor} of type $A$. For $\KK$ a field, the group $\G_A(\KK)$ is called the {\bf minimal Kac--Moody group} of (simply connected) type $A$ over $\KK$. 
\end{definition}

The relation (R1) says that the torus $T_k$ normalises each {\bf real root group} $\U_{\alpha}(k)$. The relation (R2) says that the elements $\widetilde{s}_i$ normalise $T_k$ (the notation $s_i\alpha_j^{\vee}$ refers to the dual action of $W\leq\GL(\h^*)$ on $\h$). The relation (R3) implies that $T_k$ is already contained in the subgroup generated by the real root groups. The relation (R4) says that the elements $\widetilde{s}_i$ permute (by conjugation) the real root groups according to the corresponding action of the Weyl group $W$ on $\Delta^{re}$. 

We now explain (R0). Note that the product $\prod_{\gamma=i\alpha+j\beta\in ]\alpha,\beta[_{\NN}}x_{\gamma}(C^{\alpha\beta}_{ij}t^iu^j)$ only makes sense if the interval $]\alpha,\beta[_{\NN}$ is finite and contained in $\Delta^{re}$. A pair of roots $\{\alpha,\beta\}\in\Delta^{re}$ is {\bf prenilpotent} if there exist $v,w\in W$ such that $\{v\alpha,v\beta\}\subseteq\Delta^{re+}$ and $\{w\alpha,w\beta\}\subseteq\Delta^{re-}$. If $\beta\neq\pm\alpha$, this turns out to be equivalent to requiring that $]\alpha,\beta[_{\NN}$ be finite and contained in $\Delta^{re}$.

Finally, we explain where the constants $C^{\alpha\beta}_{ij}\in\ZZ$ come from. Using (R4) and the definition of a prenilpotent pair, we may assume that $\alpha,\beta\in\Delta^{re+}$. We would like to be able to write down the exponential $\exp(re_{\alpha})=\sum_{n\geq 0}r^ne_{\alpha}^n/n!$ for $r\in k$ ($k$ a ring) in a suitable space. A natural candidate would be the enveloping algebra of $\n^+$, except we need to define a $k$-form of this algebra in which the fractions $1/n!$ make sense, and which also allows for infinite sums.

\begin{definition}\label{definition:Zform}
Let $\UU_{\CC}(\g_A)$ be the universal enveloping algebra of $\g_A$, and consider its $\ZZ$-subalgebra $\UU^+_{\ZZ}$ generated by the elements $e_i^n/n!$ for $i\in I$ and $n\in\NN$. Then $\UU^+_{\ZZ}$ is a $\ZZ$-form of $\UU_{\CC}(\n^+)$, that is, the canonical map $\UU^+_{\ZZ}\otimes_{\ZZ}\CC\to\UU_{\CC}(\n^+)$ is an isomorphism. For a ring $k$, one also defines the $k$-form $\UU^+_k:=\UU^+_{\ZZ}\otimes_{\ZZ}k$. Let $\UU^+_{k}=\bigoplus_{\alpha\in Q_+}\UU^+_{\alpha k}$ be the $Q_+$-gradation of $\UU^+_k$ inherited from the $Q_+$-gradation of $\n^+$, where $Q_+:=\bigoplus_{i\in I}\NN\alpha_i$ (note that the $\UU^+_{\alpha k}$ are finite-dimensional $k$-modules). Finally, set $\widehat{\UU}^+_k:=\prod_{\alpha\in Q_+}\UU^+_{\alpha k}$.

For each $\alpha\in\Delta^{re+}$ we have $e_{\alpha}^n/n!\in\UU^+_{n\alpha\ZZ}$, so that for any $r\in k$ the exponential
$$\exp(re_{\alpha})=\sum_{n\geq 0}r^ne_{\alpha}^n/n!\in (\widehat{\UU}^+_k)^{\times}$$
belongs to the group of invertible elements of $\widehat{\UU}^+_k$.
\end{definition}

The integers $C^{\alpha\beta}_{ij}$ can then be computed from the group commutator $[g,h]:=g\inv h\inv gh$ of the exponentials $\exp(te_{\alpha})$ and $\exp(ue_{\beta})$ inside $\widehat{\UU}^+_k$.

\begin{example}
Suppose that the simple roots $\{\alpha_i,\alpha_j\}$ form a subsystem of type $A_2$, that is, the corresponding sub-matrix of $A$ is $\begin{psmallmatrix}2&-1\\ -1&2\end{psmallmatrix}$. We then have $]\alpha_i,\alpha_j[_{\NN}=\{\alpha_i+\alpha_j\}$ and $e_{\alpha_i}=e_i$, $e_{\alpha_j}=e_j$ and $e_{\alpha_i+\alpha_j}=[e_i,e_j]$. Using the Serre relations $(\ad e_i)^2e_j=0=(\ad e_j)^2e_i$, one can compute in $\widehat{\UU}^+_k$ that
$$[\exp(te_{\alpha_i}),\exp(ue_{\alpha_j})]=[\exp(te_i),\exp(ue_j)]=\exp(tu[e_i,e_j])=\exp(tue_{\alpha_i+\alpha_j}),$$
so that $C^{\alpha_i\alpha_j}_{11}=1$.
\end{example}

Now that we have explained Definition~\ref{definition:constructiveTF}, we make two remarks, respectively justifying why $\G_A(\KK)$ is the right object for $A$ of finite type, and why it is not too small (the presentation does not collapse) over any ring $k$. 

\begin{remark}
If $A$ is a Cartan matrix and $\KK$ a field, then $\G_A(\KK)$ is the Chevalley group of (simply connected) type $A$.
\end{remark}

\begin{remark}\label{remark:GAadk}
One can extend $\UU^+_{\ZZ}$ (see Definition~\ref{definition:Zform}) to a $\ZZ$-form $\UU_{\ZZ}$ of $\UU_{\CC}(\g_A)$, and hence for any ring $k$ define a $k$-form $\g_{Ak}:=(\g_A\cap\UU_{\ZZ})\otimes_{\ZZ}k$ of $\g_A$. As before, we can then define the group
$$G_A^{ad}(k):=\langle \exp\ad re_{\alpha} \ | \ r\in k, \ \alpha\in\Delta^{re}\rangle\leq\Aut(\g_{Ak}).$$
One easily checks that the relations (R0)--(R4) are satisfied in $G_A^{ad}(k)$ (this is how the relations were found!), and so we have an adjoint action map
$$\Ad_k\co\G_A(k)\twoheadrightarrow G_A^{ad}(k)\leq\Aut(\g_{Ak}).$$
In particular, $\G_A(k)$ is ``big enough'' as it admits $G_A^{ad}(k)$ as a quotient (and the same can be done with the highest weight representations $\pi_{\lambda}$ instead of $\ad$).
\end{remark}

To justify why $\G_A(\KK)$ is not too big either over fields $\KK$ (i.e. we found sufficiently many relations so that $\Ad_{\KK}$ has small, central kernel), we will need the theory of buildings (see the ``Prerequisites'' notes).

\section{Buildings}

If $\KK$ is a field, it readily follows from the relations (R0)--(R4) that the real root groups $\U_{\alpha}(\KK)$ satisfy the axioms of an RGD system (which is no surprise as RGD systems were defined to fit this picture).

\begin{lemma}
Let $\KK$ be a field. Then $(\G_A(\KK),(\U_{\alpha}(\KK))_{\alpha\in\Delta^{re}},T_{\KK})$ is an RGD system of type $(W,S)$.
\end{lemma}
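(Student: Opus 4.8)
The relations (R0)--(R4) were set up precisely so as to make the real root groups of $\G_A(\KK)$ into an RGD system (as the discussion following Definition~\ref{definition:constructiveTF} already suggests), so the verification is, for the most part, a dictionary translation of the RGD axioms into (R0)--(R4); the only input that is not a formal consequence of the presentation is a non-degeneracy statement, and for that I would use the adjoint morphism of Remark~\ref{remark:GAadk}. Throughout, I identify $\Delta^{re}=W\Pi$, together with its partition $\Delta^{re}=\Delta^{re+}\sqcup\Delta^{re-}$ and the $W$-action, with the root system of the Coxeter system $(W,S)$ via the standard correspondence between real roots and reflections/walls; this is what makes ``of type $(W,S)$'' meaningful.

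First, the purely formal part. By construction $\G_A(\KK)$ is generated by $T_\KK$ and the $\U_\alpha(\KK)$, $\alpha\in\Delta^{re}$, and in fact already by the $\U_\alpha(\KK)$, since (R3) writes the generators $r^{\alpha_i^\vee}$ of $T_\KK$ inside $\langle\U_{\pm\alpha_i}(\KK)\rangle$. For a prenilpotent pair $\{\alpha,\beta\}\subseteq\Delta^{re}$ with $\beta\neq\pm\alpha$, the interval $]\alpha,\beta[_{\NN}$ is finite and contained in $\Delta^{re}$, so the right-hand side of (R0) is a genuine element of $\U_{]\alpha,\beta[}(\KK):=\langle\U_\gamma(\KK) : \gamma\in]\alpha,\beta[_{\NN}\rangle$, and (R0) is exactly the commutation axiom $[\U_\alpha(\KK),\U_\beta(\KK)]\subseteq\U_{]\alpha,\beta[}(\KK)$. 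Next, $T_\KK$ normalises every root group: for a simple root this is (R1), and for $\alpha=w\alpha_j$ one lifts $w$ to a product $\widetilde w$ of the $\widetilde s_i$ and iterates (R4) to get $\U_\alpha(\KK)=\widetilde w\,\U_{\alpha_j}(\KK)\,\widetilde w\inv$; then for $t\in T_\KK$ the element $\widetilde w\inv t\widetilde w$ lies in $T_\KK$ by (R2) and normalises $\U_{\alpha_j}(\KK)$ by (R1), whence $t\,\U_\alpha(\KK)\,t\inv=\U_\alpha(\KK)$. (And $T_\KK$, being abelian, normalises itself.)

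For the Weyl-element ($W$-equivariance) axiom: given $i\in I$ and $u=x_{\alpha_i}(t)\in\U_{\alpha_i}(\KK)\setminus\{1\}$ with $t\in\KK^\times$, put $m(u):=\widetilde s_i(t)$. From (R3) one gets $\widetilde s_i(t)=t^{\alpha_i^\vee}\widetilde s_i$, so conjugation by $\widetilde s_i(t)$ agrees with conjugation by $\widetilde s_i$ on the root groups (which $T_\KK$ normalises), and (R4) then gives $\widetilde s_i(t)\,\U_\beta(\KK)\,\widetilde s_i(t)\inv=\U_{s_i\beta}(\KK)$ for all $\beta\in\Delta^{re}$. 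Conjugating the defining identity $\widetilde s_i(t)=x_{\alpha_i}(t)x_{-\alpha_i}(t\inv)x_{\alpha_i}(t)$ by $\widetilde s_i$, using (R4) on the right-hand side and (R2)--(R3) on the left, yields $x_{-\alpha_i}(t)x_{\alpha_i}(t\inv)x_{-\alpha_i}(t)=\widetilde s_i(t\inv)$; substituting $t\leftrightarrow t\inv$, this exhibits $m(u)$ in the required form $u'\,u\,u''$ with $u'=u''=x_{-\alpha_i}(t\inv)\in\U_{-\alpha_i}(\KK)$. Finally, for $u=x_{\alpha_i}(t)$ and $v=x_{\alpha_i}(t')$ one gets $m(u)m(v)\inv=(tt'^{-1})^{\alpha_i^\vee}\in T_\KK$, again from (R2)--(R3).

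It remains to rule out a collapse of the presentation: $\U_\alpha(\KK)\neq\{1\}$ for every $\alpha\in\Delta^{re}$, and $\U_{\alpha_i}(\KK)\not\subseteq\U_-(\KK):=\langle\U_\beta(\KK):\beta\in\Delta^{re-}\rangle$. This I expect to be the only genuinely non-formal point, and is where I would leave the presentation and push everything into $\Aut(\g_{A\KK})$ via the morphism $\Ad_\KK$ of Remark~\ref{remark:GAadk}, under which $x_{\alpha_i}(r)$ acts as $\exp(\ad re_i)$. Grade $\g_{A\KK}=\bigoplus_{n\in\ZZ}(\g_{A\KK})_{[n]}$ by height, with $\h'$ in degree $0$. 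A short computation in the $\ZZ$-form gives $\exp(\ad re_i)\cdot f_i=f_i-r\alpha_i^\vee+r^2e_i$, which is $\neq f_i$ whenever $r\neq0$ (as $\alpha_i^\vee\neq0$ in $\g_{A\KK}$, the coroots being linearly independent); hence $x_{\alpha_i}(r)\neq1$, and then by (R4) every $\U_\alpha(\KK)$ is nontrivial. On the other hand, by local nilpotence of $\ad e_\beta$ for $\beta\in\Delta^{re-}$ (Remark~\ref{remark:locnilp}), each generator $\exp(\ad se_\beta)$ of $\Ad_\KK(\U_-(\KK))$ maps $(\g_{A\KK})_{[n]}$ into $\bigoplus_{m\leq n}(\g_{A\KK})_{[m]}$ and acts as the identity modulo $\bigoplus_{m<n}(\g_{A\KK})_{[m]}$; hence the same holds for every element of $\Ad_\KK(\U_-(\KK))$. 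Applied to $f_i\in(\g_{A\KK})_{[-1]}$ this produces a vector in $f_i+\bigoplus_{m\leq-2}(\g_{A\KK})_{[m]}$, whereas $\exp(\ad re_i)\cdot f_i$ has nonzero components in degrees $0$ and $+1$ when $r\neq0$; therefore $x_{\alpha_i}(r)\notin\U_-(\KK)$. This adjoint-representation step is the one place where the proof departs from formal manipulation of (R0)--(R4), and is the part I would spell out most carefully.
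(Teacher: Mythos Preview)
Your proposal is correct. The paper does not actually give a proof of this lemma: it merely asserts, in the sentence preceding the statement, that ``it readily follows from the relations (R0)--(R4) that the real root groups $\U_{\alpha}(\KK)$ satisfy the axioms of an RGD system (which is no surprise as RGD systems were defined to fit this picture).'' Your write-up is a faithful and correct expansion of exactly that sentence: the axioms (RGD1), (RGD2), (RGD4) and the $T$-normalisation are read off from (R0)--(R4) as you do, and the only point that is not a formal consequence of the presentation---nontriviality of the root groups and $\U_{\alpha_i}(\KK)\not\subseteq\U_-(\KK)$---you handle via the adjoint morphism $\Ad_\KK$ of Remark~\ref{remark:GAadk}, which is precisely the tool the paper sets up for this purpose. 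One minor remark: your computation $\exp(\ad re_i)\cdot f_i=f_i-r\alpha_i^{\vee}+r^2e_i$ tacitly uses that the divided power $e_i^{2}/2!$ lies in $\UU^+_{\ZZ}$, so that the formula is valid in $\g_{A\KK}$ for any field $\KK$ (including characteristic $2$); this is true by Definition~\ref{definition:Zform} and worth saying explicitly.
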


The deeper result of this section, which is purely building-theoretic, is that RGD systems yield (twin) BN-pairs, and hence strongly transitive actions on buildings.

\begin{corollary}
Let $\KK$ be a field. Set $$\U^{\pm}_A(\KK)=\langle \U_{\alpha}(\KK) \ | \ \alpha\in\Delta^{re\pm}\rangle, \quad B^{\pm}=T_{\KK}\ltimes U^{\pm}_A(\KK)\quad\textrm{and}\quad N:=\langle\widetilde{s}_i, \ T_{\KK} \ | \ i\in I\rangle.$$
Then $(B^+,N)$ and $(B^-,N)$ are (twinned, saturated) BN-pairs. In particular, $\G_A(\KK)$ acts strongly transitively on the associated buildings $X_{\pm}=X(\G_A(\KK),B^{\pm})$, and we have Bruhat decompositions $\G_A(\KK)=\coprod_{w\in W}B^{\pm}wB^{\pm}$.
\end{corollary}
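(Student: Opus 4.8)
The strategy is to recognise the Corollary as an instance of a purely formal passage: an RGD system of type $(W,S)$ produces a \emph{twin} BN-pair, and a twin BN-pair produces a twin building on which the ambient group acts strongly transitively, with the two Bruhat decompositions built into the BN-pair axioms. By the preceding Lemma the hypothesis of the first step holds for $G:=\G_A(\KK)$ with root groups $\U_\alpha(\KK)$ and torus $H:=T_{\KK}$, so it remains to run the two formal steps. I would begin with the bookkeeping that makes $(B^{\pm},N)$ a candidate: $H$ normalises each $\U_\alpha(\KK)$ by (R1), hence normalises $\U^{\pm}_A(\KK)$, so $B^{\pm}=H\ltimes\U^{\pm}_A(\KK)$ is a subgroup; (R3) puts $H$ inside $\langle\U_\alpha(\KK)\mid\alpha\in\Delta^{re}\rangle$ and (R4) conjugates positive root groups to negative ones via the $\widetilde s_i$, so $G=\langle B^{+},N\rangle$; (R2) gives $H\trianglelefteq N$, and (R2)--(R3) (together with $\widetilde s_i^{\,2}\in H$) show that $\widetilde s_i\mapsto s_i$ induces an isomorphism $N/H\xrightarrow{\ \sim\ }W$.

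The substantive step is the verification of the BN-pair axioms, and here I would use the classical root-group calculus. For $w\in W$ the set $\Delta^{re+}\cap w\Delta^{re-}$ (the inversion set of $w$) is finite, so $\U_w:=\U^{+}_A(\KK)\cap w\U^{-}_A(\KK)w^{-1}$ is a finite product of real root groups; the technical core is the factorisation $\U^{+}_A(\KK)=\U_w\cdot\bigl(\U^{+}_A(\KK)\cap w\U^{+}_A(\KK)w^{-1}\bigr)$, proven by induction on $\height$/length, with the product of root groups comprising $\U_w$ being ``direct'' in the appropriate ordered sense — this is exactly where the commutator relations (R0) and the nilpotency of the finite prenilpotent subsets of $\Delta^{re}$ enter. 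Granting this, the mixing relation $B^{+}s_iB^{+}wB^{+}\subseteq B^{+}wB^{+}\cup B^{+}s_iwB^{+}$ reduces, after writing $w$ as a word in the $s_j$ and using (R4) to move root groups across the $\widetilde s_j$, to a rank-one computation inside $\langle\U_{\pm\alpha_i}(\KK)\rangle$: a nontrivial element of $\U_{-\alpha_i}(\KK)$ has the form $x_{\alpha_i}(\ast)\,\widetilde s_i\,h\,x_{\alpha_i}(\ast)$ (this is the definition of $\widetilde s_i(r)$ together with (R3)), i.e. the Bruhat decomposition of a central quotient of $\SL_2(\KK)$. The non-degeneracy axiom $s_iB^{+}s_i^{-1}\neq B^{+}$ holds because $1\neq\U_{-\alpha_i}(\KK)\not\subseteq B^{+}$, which is one of the RGD axioms (positive and negative do not collapse); and saturatedness, $\bigcap_{n\in N}nB^{\pm}n^{-1}=H$, in particular $B^{\pm}\cap N=H$, follows from the same factorisation, since an element of $N\cap B^{+}$ must map to $1\in W$ and have trivial $\U^{+}_A(\KK)$-component.

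For the twinning I would verify the codistance axioms relating $B^{+}$ and $B^{-}$: the disjointness $B^{\epsilon}wB^{-\epsilon}\cap B^{\epsilon}w'B^{-\epsilon}=\varnothing$ for $w\neq w'$, and the reduction $B^{\epsilon}s_iB^{\epsilon}wB^{-\epsilon}\subseteq B^{\epsilon}s_iwB^{-\epsilon}$ when $\ell(s_iw)>\ell(w)$, both reduce via the $\U_w$-factorisation to the same rank-one picture, now comparing a positive with a negative root group and using $\U_{\alpha_i}(\KK)\cap\U_{-\alpha_i}(\KK)=1$. Once $(B^{+},N)$ and $(B^{-},N)$ are established as twinned saturated BN-pairs, the conclusions are formal: a BN-pair with Weyl group $W$ yields the Bruhat decomposition $G=\coprod_{w\in W}B^{\pm}wB^{\pm}$ (disjointness being equivalent to $B^{\pm}\cap N=H$ and $N/H\cong W$), and the coset geometry $X_{\pm}=X(G,B^{\pm})$ with chamber set $G/B^{\pm}$ is a building of type $(W,S)$; its apartments are the $G$-translates of the image of $N$, and since $N$ acts transitively on the chambers of that apartment while $G$ acts transitively on apartments, the action of $G$ is strongly transitive. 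The twinning upgrades the pair $(X_{+},X_{-})$ to a twin building.

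The main obstacle is the factorisation lemma together with the uniqueness of the ordered root-group decomposition of $\U_w$: this is where one must control (possibly infinite) products of root groups over an arbitrary field, check that the ordered products in (R0) are well defined independently of the chosen order on the intervals $]\alpha,\beta[_{\NN}$, and — crucially — rule out any relations among the $\U_\alpha(\KK)$ beyond those imposed by (R0)--(R4). Everything downstream (the rank-one $\SL_2$-computations, the dictionary between BN-pairs and buildings, strong transitivity) is standard once this structural backbone is in hand; in the book this backbone is supplied by the general theory of RGD systems and twin BN-pairs, which the proof simply cites.
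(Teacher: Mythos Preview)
Your proposal is correct and follows exactly the paper's route: the preceding Lemma supplies an RGD system, and the Corollary is then the general building-theoretic fact (stated just before it) that RGD systems yield saturated twinned BN-pairs and hence strongly transitive actions on the associated buildings with Bruhat decompositions. The paper gives no proof beyond invoking this implication, so your sketch of the $\U_w$-factorisation and rank-one $\SL_2$ reductions is simply an explicit unpacking of the cited general RGD~$\Rightarrow$~twin BN-pair theorem, not a different approach.
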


\section{Axiomatic}

At this point, we could already justify why $\G_A(\KK)$ is not too big, but before we do so, let us also ask ourselves the question whether this group $\G_A(\KK)$ is essentially unique, or in other words whether any group deserving the name of ``(minimal) Kac--Moody'' in the sense of Definition~\ref{definition:KMG} is already isomorphic to $\G_A(\KK)$.

Such deserving candidates should be group functors $\G\co\Zalg\to\Gr$ such that $\G(\CC)$ has an adjoint action on $\g_A$ with small, central kernel. In other words, they should come equipped with group functor morphisms $\varphi_i\co\SL_2\to\G$ ($i\in I$) and $\eta\co T\to \G$ (respectively exponentiating the fundamental copies $\CC e_i+\CC \alpha_i^{\vee}+\CC f_i$ of $\sll_2(\CC)$ and the Cartan subalgebra $\h'$), such that
\begin{enumerate}
\item[(KMG5)] there is an adjoint action $\Ad_{\CC}\co \G(\CC)\to \Aut(\g_A)$, with central kernel contained in $T_{\CC}$, such that 
$$\Ad_{\CC}\varphi_i\begin{psmallmatrix}1 &r\\ 0&1\end{psmallmatrix}=\exp\ad re_i, \quad \Ad_{\CC}\varphi_i\begin{psmallmatrix}1 &0\\ -r&1\end{psmallmatrix}=\exp\ad rf_i\quad\textrm{and}\quad \Ad_{\CC}(\eta(e^{r\alpha_i^{\vee}}))=\exp\ad r\alpha_i^{\vee}$$ for all $r\in\CC$ and $i\in I$.
\end{enumerate} 
Since we want our group to be ``minimal'', it should also be generated by the fundamental copies of $\SL_2$ and the torus (at least over fields):
\begin{enumerate}
\item[(KMG1)] If $\KK$ is a field, $\G(\KK)$ is generated by the $\varphi_i(\SL_2(\KK))$ ($i\in I$) and $\eta(T(\KK))$. 
\end{enumerate} 
Note that these two axioms are not sufficient: over $\CC$, the group $\G(\CC)$ is small enough thanks to (KMG5), but for $k\neq\CC$, we could take for $\G(k)$ the free product of the $\varphi_i(\SL_2(k))$ ($i\in I$) and $\eta(T(k))$ without violating (KMG1) or (KMG5). To ensure that $\G(k)$ is also small enough for $k\neq\CC$ (at least over fields), we then impose one last axiom:
\begin{enumerate}
\item[(KMG4)] If $k\to\CC$ is an injective morphism from a ring $k$ to $\CC$, then the corresponding group morphism $\G(k)\to\G(\CC)$ is also injective. 
\end{enumerate} 

\begin{definition}
A triple $(\G,(\varphi_i)_{i\in I},\eta)$ as above satisfying the axioms (KMG1), (KMG4), (KMG5) is called a {\bf Tits functor} of type $A$.
\end{definition}

Here is now the desired uniqueness statement. Let $\Ga\co\Zalg\to\Gr:k\mapsto (k,+)$ be the additive group functor, and let $x_{\pm}\co\Ga\to\SL_2$ be the functors defined by
$$x_+(r)=\begin{psmallmatrix}1 &r\\ 0&1\end{psmallmatrix}\quad\textrm{and}\quad x_-(r)=\begin{psmallmatrix}1 &0\\ -r&1\end{psmallmatrix}.$$

\begin{theorem}\label{thm:uniquenessTF}
Let $(\G,(\varphi_i)_{i\in I},\eta)$ be a Tits functor of type $A$. Then there is a unique morphism of group functors $\pi\co\G_A\to\G$ such that the diagrams
\begin{equation*}
\xymatrix{
T \ar[r] \ar[rd]_{\eta} &
\G_{A} \ar[d]^{\pi}\\
&\G}\qquad\quad\textrm{and}\qquad\quad
\xymatrix{
\Ga \ar[r]^{x_{\pm \alpha_i}} \ar[rd]_{x_{\pm}} &
\U_{\pm\alpha_i} \ar[r] &
\G_{A} \ar[d]^{\pi}\\
&\SL_2\ar[r]_{\varphi_i} & \G}
\end{equation*}
are commutative. Moreover, $\pi_{\KK}\co\G_A(\KK)\to\G(\KK)$ is an isomorphism for any field $\KK$ (up to kernel contained in $T_{\KK}$), provided $\varphi_i(\SL_2(\KK))\not\subseteq\pi_{\KK}(\U^+_A(\KK))$ for all $i\in I$.\footnote{This last condition is a ``non-degeneracy condition'' that prevents degenerate examples.}
\end{theorem}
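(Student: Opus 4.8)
Since $\G_A$ is defined by a presentation, the plan is to prescribe $\pi$ on generators and verify the relations. The first remark is that, for \emph{every} ring $k$, the group $\G_A(k)$ is already generated by the simple real root groups $\U_{\alpha_i}(k),\U_{-\alpha_i}(k)$ ($i\in I$): the torus is absorbed by (R3), each $\widetilde{s}_i=x_{\alpha_i}(1)x_{-\alpha_i}(1)x_{\alpha_i}(1)$ lies in $\langle\U_{\pm\alpha_i}(k)\rangle$, and then so does $x_{w\alpha_i}(t)=\widetilde{w}\,x_{\alpha_i}(t)\,\widetilde{w}\inv$ by iterating (R4). As the two commutative squares in the statement pin $\pi$ down on $T$ and on the $\U_{\pm\alpha_i}$, they pin $\pi_k$ down on a generating set for each $k$, so together with naturality this settles \emph{uniqueness}. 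For existence, put $x^{\G}_{\pm\alpha_i}(r):=\varphi_i(x_{\pm}(r))$, $\widetilde{s}_i^{\G}:=\varphi_i(\widetilde{s})$ with $\widetilde{s}:=x_+(1)x_-(1)x_+(1)\in\SL_2$, set $x^{\G}_{w\alpha_i}(t):=\widetilde{w}^{\G}\,x^{\G}_{\alpha_i}(t)\,(\widetilde{w}^{\G})\inv$ for a fixed reduced decomposition of each $w\in W$ (with $\widetilde{w}^{\G}$ the corresponding word in the $\widetilde{s}_j^{\G}$), and send the generators $r^{\alpha_i^{\vee}}$ of $T_k$ to their $\eta$-images; the task is to check (R0)--(R4).

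\textbf{Verifying the relations.} Relations (R1)--(R3) take place inside a single $\varphi_i(\SL_2(k))$ together with $\eta(T(k))$ and follow from the $\SL_2$ relations, the relations defining the torus functor, and the compatibility of $\eta$ with the diagonal torus of each $\varphi_i$ (forced by (KMG5), any central discrepancy being detected over $\CC$). The two substantive points are the well-definedness of the $x^{\G}_{\alpha}(t)$ --- without which (R4) is not even a statement --- and (R0). For well-definedness I would pass to $k=\CC$: there $\Ad_{\CC}x^{\G}_{\pm\alpha_i}(r)=\exp\ad re_{\pm\alpha_i}$ by (KMG5), so $\Ad_{\CC}x^{\G}_{w\alpha_i}(t)=\exp\ad te_{w\alpha_i}$, manifestly independent of the reduced word chosen; since $\ker\Ad_{\CC}\subseteq T_{\CC}$, two candidates for $x^{\G}_{\alpha}(t)$ differ by a central torus element, algebraic in $t$ and trivial at $t=0$, hence equal --- which gives (R4) as well. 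As for (R0), the integers $C^{\alpha\beta}_{ij}$ were \emph{defined} as the constants appearing in the expansion of the group commutator of $\exp(te_{\alpha})$ and $\exp(ue_{\beta})$ inside the completed $\ZZ$-form $\widehat{\UU}^{+}_{\ZZ}$ of Definition~\ref{definition:Zform}; after using (R4) to move a prenilpotent pair into $\Delta^{re+}$, one identifies the relevant $x^{\G}_{\gamma}(\cdot)$ with the images of these exponentials and transports that integral (and finite) computation. Concretely this reduces, via the braid moves relating reduced words, to identities among the $\varphi_i(\SL_2(k))$, $\varphi_j(\SL_2(k))$, $\eta(T(k))$ for pairs $i,j$, which one verifies inside (completions of) the $k$-forms $\UU^{+}_k$; this is the only computationally involved part of the construction. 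One thereby obtains $\pi\co\G_A\to\G$.

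\textbf{Isomorphism over a field.} Fix a field $\KK$. Over a field, $\SL_2(\KK)=\langle x_+(\KK),x_-(\KK)\rangle$, so $\varphi_i(\SL_2(\KK))\subseteq\im\pi_{\KK}$; with $\eta(T(\KK))\subseteq\im\pi_{\KK}$ and (KMG1), $\pi_{\KK}$ is onto. For the kernel, set $B^{\pm}_{\G}:=\pi_{\KK}(B^{\pm})$ and $N_{\G}:=\pi_{\KK}(N)$, where $B^{\pm}=T_{\KK}\ltimes\U^{\pm}_A(\KK)$ and $N=\langle\widetilde{s}_i, T_{\KK}\rangle$ are the twin BN-pair data of $\G_A(\KK)$. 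The hypothesis $\varphi_i(\SL_2(\KK))\not\subseteq\pi_{\KK}(\U^{+}_A(\KK))$ says exactly that no simple reflection $s_i$ degenerates under $\pi_{\KK}$, and --- invoking the theory of RGD systems and BN-pairs --- this guarantees that $(\G(\KK),B^{\pm}_{\G},N_{\G})$ is again a twin BN-pair \emph{of type $(W,S)$}, whence a Bruhat decomposition $\G(\KK)=\coprod_{w\in W}B^{+}_{\G}\,\pi_{\KK}(\widetilde{w})\,B^{+}_{\G}$ with cosets genuinely indexed by $W$. Now let $z\in\ker\pi_{\KK}$. By the Bruhat decomposition of $\G_A(\KK)$ we have $z\in B^{+}wB^{+}$ for a unique $w\in W$; applying $\pi_{\KK}$ gives $1\in B^{+}_{\G}\,\pi_{\KK}(\widetilde{w})\,B^{+}_{\G}$, forcing $w=1$ and $z\in B^{+}$. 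Symmetrically $z\in B^{-}$, so $z\in B^{+}\cap B^{-}=T_{\KK}$. Finally $\pi_{\KK}$ is injective on each $\U_{\alpha}(\KK)$ (on the simple ones because $\ker\varphi_i$ is central in $\SL_2$, hence trivial on unipotents, on the rest by conjugation), so from $z\,x_{\alpha}(t)\,z\inv=x_{\alpha}(\chi_{\alpha}(z)t)$ and $\pi_{\KK}(z)=1$ we get $\chi_{\alpha}(z)=1$ for all $\alpha$; hence $z$ centralises every root group and $T_{\KK}$, i.e. $z$ is central. Thus $\ker\pi_{\KK}$ is a central subgroup of $T_{\KK}$ and $\pi_{\KK}$ is an isomorphism up to it. (Over $\CC$ the hypothesis is automatic: $\Ad_{\CC}\circ\pi_{\CC}$ agrees on generators --- hence everywhere --- with the adjoint action $\G_A(\CC)\to G_A^{ad}(\CC)$ of Remark~\ref{remark:GAadk}, so $\ker\pi_{\CC}\subseteq T_{\CC}$ outright.)

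\textbf{Where the difficulty lies.} The least formal step is the one just used: that $(\G(\KK),B^{\pm}_{\G},N_{\G})$ is a BN-pair \emph{of the full type $(W,S)$}, not of some proper Coxeter quotient. A priori $\pi_{\KK}$ might fuse part of the diagram and shrink the buildings --- the extreme case being the free product of the $\varphi_i(\SL_2(\KK))$ and $\eta(T(\KK))$, which acts on a tree rather than the building of type $(W,S)$ and for which $\pi_{\KK}$ is very far from injective; excluding exactly this is the point of the non-degeneracy hypothesis. Establishing it is the heart of the proof and needs genuine building-theoretic input: an epimorphism that is injective on every rank-one subgroup and moves every simple reflection transports an RGD/BN-pair structure faithfully, inducing an isomorphism of the associated (twin) buildings of type $(W,S)$. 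Everything else --- uniqueness, surjectivity, (R1)--(R4), and the reduction of $\ker\pi_{\KK}$ into $T_{\KK}$ --- is comparatively routine, with only (R0) demanding an explicit computation.
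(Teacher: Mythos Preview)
Your treatment of uniqueness and of the isomorphism over a field matches the paper's: push the RGD system forward, obtain a BN-pair of the same type $(W,S)$ (this is where the non-degeneracy hypothesis enters), compare Bruhat decompositions to force $\ker\pi_{\KK}\subseteq B^+$, and conclude via saturatedness (you use $B^+\cap B^-=T_{\KK}$, the paper the equivalent $\bigcap_{g}gB^+g^{-1}=T_{\KK}$). That part is fine.

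The genuine gap is in the \emph{existence} of $\pi_k$ for an arbitrary ring $k$: you never invoke (KMG4), and without it the relations cannot be transported from $\CC$ to general $k$. You verify (R4) and the well-definedness of $x^{\G}_{\alpha}(t)$ by ``passing to $k=\CC$'' and using (KMG5), but you give no mechanism for concluding the same identity in $\G(k)$ when $k\neq\CC$. Your handling of (R0) has the same defect, compounded by the claim that one ``identifies the relevant $x^{\G}_{\gamma}(\cdot)$ with the images of these exponentials'' and then checks things ``inside (completions of) the $k$-forms $\UU^+_k$'': there is simply no map between the abstract group $\G(k)$ and $\widehat{\UU}^+_k$ --- the only contact the axioms provide is $\Ad_{\CC}$, over $\CC$. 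The paper's argument is exactly the missing bridge: (KMG5) yields all the relations in $\G(\CC)$ at once (since $\G(\CC)$ modulo centre is $G^{ad}_A(\CC)$); then (KMG4) pulls each such relation back to any subring of $\CC$ --- in particular to a polynomial ring $\ZZ[X_1,\dots,X_n]$ carrying indeterminates for the finitely many parameters occurring in the relation --- and functoriality of $\G$ under the evaluation map $\ZZ[X_1,\dots,X_n]\to k$ specialises it to arbitrary $k$. As Remark~\ref{remark:CTFTF} stresses, this is precisely why Tits functors are required to be defined on rings and not just on fields; your argument as written uses only (KMG1) and (KMG5), and would therefore apply verbatim to the degenerate ``free product for $k\neq\CC$'' functor described just before the axioms, for which $\pi_k$ manifestly does not exist.
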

\begin{proof}
Here is a sketch proof of this theorem.
\begin{enumerate}
\item
By (KMG5), we have the morphism $\pi_{\CC}\co\G_A(\CC)\to\G(\CC)$ (as (KMG5) says that essentially, $\G(\CC)$ is just $G^{ad}_A(\CC)$).
\item
By (KMG4), we then get the morphism $\pi_k\co\G_A(k)\to\G(k)$ for any subring $k$ of $\CC$. 
\item
Using the functoriality of $\G_A$ and $\G$, this then yields the desired morphism $\pi_k$ for any ring $k$.
\item
Assume now that $k=\KK$ is a field. Then $\pi_{\KK}$ is surjective by (KMG1). Moreover, $\pi_{\KK}$ maps the RGD system of $\G_A(\KK)$ to an RGD system of $\G(\KK)$ of the same type\footnote{This is where the non-degeneracy condition is used.}. In particular, the kernel of
$$\pi_{\KK}\co \G_A(\KK)=\coprod_{w\in W}B^+wB^+\to\G(\KK)=\coprod_{w\in W}\pi_{\KK}(B^+)w\pi_{\KK}(B^+)$$
lies in $B^+$. Hence $\mathrm{Ker}(\pi_{\KK})\subseteq\bigcap_{g\in G}gB^+g\inv=T_{\KK}$, as desired (where the last equality follows from the fact that the BN-pair $(B^+,N)$ is saturated). \qedhere
\end{enumerate}
\end{proof}

\begin{remark}\label{remark:CTFTF}
Note that, as shown by the above proof, we need for the uniqueness statement over fields that the Tits functors be defined over rings and not just fields: otherwise, there would for instance be no way to go from $\CC$ to a finite field when trying to construct the morphism $\pi$.

We also note that the terminology is a bit confusing, in that the constructive Tits functor $\G_A$ is (probably) not a Tits functor, as it (probably) does not satisfy (KMG4) in full generality (see also Section~\ref{subsection:Injectivity}).
\end{remark}

As the group functor $\G=G^{ad}_A$ from Remark~\ref{remark:GAadk} trivially satisfies the assumptions of Theorem~\ref{thm:uniquenessTF}\footnote{Technically speaking, one has to replace the group functors $\SL_2$ by the elementary subgroup functors $\SSL_2$.}, we can now justify why $\G_A(\KK)$ is not too big for $\KK$ a field.
\begin{corollary}
Let $\KK$ be a field. Then the adjoint action map $\Ad_{\KK}\co\G_A(\KK)\twoheadrightarrow G^{ad}_A(\KK)\leq\Aut(\g_{A\KK})$ has kernel contained in $T_{\KK}$.
\end{corollary}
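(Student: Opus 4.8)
The plan is to deduce this corollary directly from Theorem~\ref{thm:uniquenessTF} by taking for the Tits functor the group functor $\G := G^{ad}_A$ constructed in Remark~\ref{remark:GAadk}, so that $\pi_{\KK}$ becomes the adjoint action map $\Ad_{\KK}$ and the kernel bound $\mathrm{Ker}(\pi_{\KK})\subseteq T_{\KK}$ is exactly the assertion. First I would verify that $G^{ad}_A$ (with the appropriate morphisms $\varphi_i$ and $\eta$ obtained by exponentiating the fundamental $\sll_2$-copies $\CC e_i+\CC\alpha_i^{\vee}+\CC f_i$ and the Cartan subalgebra $\h'$ inside $\Aut(\g_{Ak})$) genuinely satisfies (KMG1), (KMG4) and (KMG5). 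Axiom (KMG5) is tautological here, since by construction $G^{ad}_A(\CC)$ acts on $\g_A$ with the prescribed formulas and its centre is contained in the image of the torus; (KMG1) holds because over a field the elementary subgroups $\SSL_2(\KK)=\SL_2(\KK)$, together with the exponentiated Cartan, generate everything by the very definition of $G^{ad}_A(\KK)$ as the group generated by the $\exp\ad re_\alpha$ combined with relation (R3); and (KMG4) follows from the fact that $\g_{Ak}=(\g_A\cap\UU_{\ZZ})\otimes_{\ZZ}k$ is functorial and an injection $k\hookrightarrow\CC$ induces an injection $\g_{Ak}\hookrightarrow\g_{A\CC}$ compatible with the actions, whence injectivity of $G^{ad}_A(k)\to G^{ad}_A(\CC)$.

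Next I would check the non-degeneracy hypothesis of Theorem~\ref{thm:uniquenessTF}, namely that $\varphi_i(\SL_2(\KK))\not\subseteq\pi_{\KK}(\U^+_A(\KK))$ for every $i\in I$. Concretely, $\varphi_i(\SL_2(\KK))$ contains the exponential $\exp\ad rf_i$ of the \emph{negative} simple root vector, which permutes the two halves of the twin building nontrivially and therefore cannot lie in the image of the positive unipotent subgroup $\U^+_A(\KK)=\langle\U_\alpha(\KK)\mid\alpha\in\Delta^{re+}\rangle$; one sees this because $\U^+_A(\KK)$ fixes the standard chamber of $X_+$ while $\exp\ad rf_i$ (equivalently $\widetilde s_i$ up to torus and unipotent factors) does not. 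Granting this, Theorem~\ref{thm:uniquenessTF} produces a unique $\pi\co\G_A\to G^{ad}_A$ compatible with the $x_{\pm\alpha_i}$ and with $\eta$; by uniqueness and the explicit formulas in (KMG5) this $\pi_{\KK}$ must coincide with $\Ad_{\KK}$, and the theorem's conclusion $\mathrm{Ker}(\pi_{\KK})\subseteq T_{\KK}$ is precisely the claim.

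The only subtlety — and the step I expect to require the most care rather than being a true obstacle — is the footnoted remark that one should replace $\SL_2$ by its elementary subgroup functor $\SSL_2$ throughout, since the adjoint Kac--Moody group is only generated by the images of the \emph{elementary} subgroups $\E(\SL_2)$, not by full $\SL_2$ over rings where these differ; over a field this distinction disappears, so for the corollary (which concerns $\G_A(\KK)$ for $\KK$ a field) it is harmless, but I would state it explicitly to make the application of Theorem~\ref{thm:uniquenessTF} formally correct. With that caveat in place the argument is essentially a direct invocation of the uniqueness theorem, and the surjectivity of $\Ad_{\KK}$ (already recorded in Remark~\ref{remark:GAadk}) together with the Bruhat-decomposition argument in the proof of Theorem~\ref{thm:uniquenessTF} give the kernel bound with no further work.
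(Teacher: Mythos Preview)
Your proposal is correct and follows exactly the approach the paper takes: apply Theorem~\ref{thm:uniquenessTF} to the Tits functor $\G=G^{ad}_A$ from Remark~\ref{remark:GAadk}, whose axioms are declared ``trivially satisfied'' (with the same footnoted caveat about replacing $\SL_2$ by $\SSL_2$), and read off the kernel bound. The only minor remark is that your verification of the non-degeneracy condition via the building is slightly roundabout; since $G^{ad}_A(\KK)\leq\Aut(\g_{A\KK})$, it is more direct to observe that $\pi_{\KK}(\U^+_A(\KK))$ preserves $\n^+_{\KK}$ while $\exp\ad f_i$ does not.
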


Since Kac--Moody algebras of affine type are realised as (twisted) loop algebras over a finite-dimensional simple Lie algebra, Theorem~\ref{thm:uniquenessTF} also allows to identify the corresponding minimal Kac--Moody groups over fields.

\begin{corollary}
Let $A=\begin{psmallmatrix}2&-2\\ -2&2\end{psmallmatrix}$. Then $\G_A(\KK)\cong\SL_2(\KK[t,t\inv])\rtimes\KK^{\times}$ for $\KK$ a field.
\end{corollary}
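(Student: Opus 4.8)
The plan is to apply Theorem~\ref{thm:uniquenessTF} with $\G=G^{ad}_A$ (or more precisely the elementary subgroup functor version), using the explicit matrix realisation of $\g_A$ from Example~\ref{example:gA}(2). First I would set $H(k):=\SL_2(k[t,t\inv])\rtimes k^\times$, where $k^\times$ acts on $\SL_2(k[t,t\inv])$ through the central extension data, i.e.\ via the cocycle implicit in the relation $\alpha_0^\vee=-\alpha_1^\vee+K$; over a field $\KK$ this semidirect product is the ``true'' derived affine Kac--Moody group, with $K$ recording the $\KK^\times$-factor. I would then exhibit $H$ as a group functor on $\Zalg$ and equip it with the data $(\varphi_i)_{i\in\{0,1\}}$ and $\eta$: $\varphi_1$ sends $\SL_2$ into the constant-loop $\SL_2(k)\subseteq\SL_2(k[t,t\inv])$ in the obvious way, while $\varphi_0$ sends $x_+(r)\mapsto I+re_0=\begin{psmallmatrix}1&0\\-rt&1\end{psmallmatrix}$ and $x_-(r)\mapsto I+rf_0=\begin{psmallmatrix}1&rt\inv\\0&1\end{psmallmatrix}$, and $\eta$ sends $e^{r\alpha_1^\vee}\mapsto \mathrm{diag}(r,r\inv)$ and $e^{r\alpha_0^\vee}\mapsto \mathrm{diag}(r\inv,r)\cdot($the $\KK^\times$-generator$)^{?}$ according to the identity $\alpha_0^\vee=-\alpha_1^\vee+K$.

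Next I would verify the three Tits-functor axioms for $(H,(\varphi_i),\eta)$. For (KMG5): the adjoint action of $\SL_2(\CC[t,t\inv])\rtimes\CC^\times$ on $\g_A\cong\sll_2(\CC[t,t\inv])\rtimes\CC K$ is by conjugation on the matrix part (the central $K$ acting trivially), and one checks on generators that conjugation by $\varphi_i\begin{psmallmatrix}1&r\\0&1\end{psmallmatrix}$ is $\exp\ad re_i$, etc., exactly as in the $\SL_2$ case; the kernel of $\Ad_\CC$ is the scalars $\pm I$ in $\SL_2(\CC)\subseteq\SL_2(\CC[t,t\inv])$ together with the central $\CC^\times$, which is contained in the torus. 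For (KMG1): over a field $\KK$, $\SL_2(\KK[t,t\inv])$ is generated by its elementary matrices (here one uses that $\KK[t,t\inv]$ is a Euclidean-like ring, or rather invokes the known fact $\SL_2=\E_2$ for Laurent polynomial rings over a field — this is classical, due to the fact that $\KK[t,t\inv]$ is a principal ideal domain with enough units; strictly speaking one works with $\E_2$ throughout, matching the footnote in the excerpt), and adjoining the $\KK^\times$-factor and the diagonal gives all of $H(\KK)$. For (KMG4): an injection $k\hookrightarrow\CC$ induces $k[t,t\inv]\hookrightarrow\CC[t,t\inv]$, hence $\SL_2(k[t,t\inv])\hookrightarrow\SL_2(\CC[t,t\inv])$ and $k^\times\hookrightarrow\CC^\times$, so the induced map $H(k)\to H(\CC)$ is injective.

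With the axioms in hand, Theorem~\ref{thm:uniquenessTF} yields a morphism $\pi\co\G_A\to H$ which is an isomorphism over every field $\KK$ up to a central kernel in $T_\KK$, provided the non-degeneracy condition $\varphi_i(\E_2(\KK))\not\subseteq\pi_\KK(\U_A^+(\KK))$ holds for $i=0,1$ — this I would check by noting that, e.g., $\varphi_1\begin{psmallmatrix}1&0\\-1&1\end{psmallmatrix}$ is a lower-triangular elementary matrix with negative ``affine grade'' $0$, hence lands in $\U_A^-$ and not in $\U_A^+$. Finally, to upgrade ``isomorphism up to central kernel in $T_\KK$'' to a genuine isomorphism $\G_A(\KK)\cong\SL_2(\KK[t,t\inv])\rtimes\KK^\times$, I would observe that the simply connected torus $T_\KK\cong(\KK^\times)^2$ of $\G_A$ maps isomorphically onto the diagonal torus $\{\mathrm{diag}(r,r\inv)\}\times\KK^\times$ of $H$ (this is forced by (R1)--(R3) together with the explicit $\eta$), so $\pi_\KK$ is injective, and surjectivity is (KMG1); thus $\pi_\KK$ is an isomorphism.

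The main obstacle I expect is the bookkeeping around the central $\KK^\times$-factor: one must get the cocycle for the semidirect product $\SL_2(\KK[t,t\inv])\rtimes\KK^\times$ exactly right so that it matches the relation $\alpha_0^\vee=-\alpha_1^\vee+K$ dictated by the realisation of $\g_A$, and then confirm that the torus $T_\KK$ of the simply connected constructive functor (which is a rank-$2$ split torus, \emph{not} a quotient) maps onto the right rank-$2$ torus in $H$ with trivial kernel — i.e.\ that no unexpected central collapse occurs on the $\G_A$ side. The other slightly delicate point is the $\SL_2$-versus-$\E_2$ issue flagged in the excerpt's footnotes: over a general field the elementary subgroup $\E_2(\KK[t,t\inv])$ equals $\SL_2(\KK[t,t\inv])$ by a classical result, but one should cite this rather than re-prove it, since it is genuinely a theorem about $K_1$ of Laurent polynomial rings and not a formal consequence of anything stated so far.
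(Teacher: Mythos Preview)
Your approach is exactly the paper's: the corollary is deduced directly from Theorem~\ref{thm:uniquenessTF} applied to the concrete loop-group functor coming from the realisation of $\g_A$ in Example~\ref{example:gA}(2), and the paper offers no more detail than the single sentence preceding the statement.

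One minor correction to your bookkeeping, which you flagged as the main obstacle: since $K=\alpha_0^\vee+\alpha_1^\vee$ is \emph{central} in $\g_A$, the element $r^{\alpha_0^\vee}r^{\alpha_1^\vee}$ is central in $\G_A(\KK)$ (check (R1) with $a_{00}+a_{10}=a_{01}+a_{11}=0$), so the $\KK^\times$-factor acts trivially and the semidirect product is really a direct product --- the paper uses $\rtimes$ loosely here, just as it does in Example~\ref{example:gA}(2) for a central extension. There is thus no nontrivial cocycle to unwind; your map $\eta$ should simply send $r^{\alpha_1^\vee}\mapsto(\mathrm{diag}(r,r\inv),1)$ and $r^{\alpha_0^\vee}\mapsto(\mathrm{diag}(r\inv,r),r)$, which is visibly an isomorphism of rank-$2$ tori and disposes of your worry about central collapse.
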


\section{Maximal Kac--Moody groups}

Let $A=(a_{ij})_{i,j\in I}$ be a GCM and $\KK$ be a field. So far, we have constructed a ``minimal'' Kac--Moody group $\G_A(\KK)$ by exponentiating the real root spaces of the Kac--Moody algebra $\g_A$, and have seen that $\G_A(\KK)$ admits the following action maps with kernel contained in $T_{\KK}$:
\begin{enumerate}
\item
$\G_A(\KK)\to\Aut(\g_{A\KK})$;
\item
$\G_A(\KK)\to\Aut(L_{\KK}(\lambda))$ for $\lambda\in\h^*$ dominant integral (where $L_{\KK}(\lambda)$ is a $\KK$-form of $L(\lambda)$ defined using $\UU_{\ZZ}$);
\item
$\G_A(\KK)\to\Aut(X_{\pm})$ where $(X_+,X_-)$ is the twin building of $\G_A(\KK)$.
\end{enumerate}

Going back to the beginning of Section~\ref{section:KMG}, we could try a bit harder to define an ``adjoint'' Kac--Moody group over $\CC$ by also exponentiating the adjoint action of imaginary root spaces on $\g_A$. The problem was that if $\alpha\in\Delta^{im}$ and $x\in\g_{\alpha}$ is nonzero, then $(\exp\ad x)y=\sum_{n\geq 0}\tfrac{1}{n!}(\ad x)^ny$ is in general an infinite sum for $y\in\g_A$. To remedy, this, we could try to complete $\g_A$ to allow such infinite sums: recall that 
$$\g_A=\bigoplus_{\alpha\in\Delta\cup\{0\}}\g_{\alpha}=\n^-\oplus\h'\oplus\n^+.$$
If we allow for infinite sums of homogeneous elements, say $x=\sum_{\alpha}x_{\alpha}$ and $y=\sum_{\beta}y_{\beta}$ with $x_{\beta},y_{\beta}\in\g_{\beta}$, then we need to be able to define the Lie bracket $[x,y]$, which for each $\gamma\in\Delta$ has homogeneous component of degree $\gamma$ given by
$$[x,y]_{\gamma}=[\sum_{\alpha}x_{\alpha},\sum_{\beta}y_{\beta}]_{\gamma}=\sum_{\alpha+\beta=\gamma}[x_{\alpha},x_{\beta}].$$
But this last sum only makes sense if it is finite, and this prevents us from simultaneously allowing infinitely many nonzero homogeneous components of positive and negative degrees. In other words, we have to choose a direction in which to complete, positive or negative: we set
$$\widehat{\g}_A:=\n^-\oplus\h'\oplus\widehat{\n}^+\quad\textrm{where}\quad \widehat{\n}^+:=\prod_{\alpha\in\Delta^+}\g_{\alpha}.$$
We could then define\footnote{The proper way to do it is in fact to take a closure of that group, see Definition~\ref{definition:completions}.}
$$G_{A,ad}^{\max}(\CC):=\langle \exp\ad x \ | \ x\in\g_{\alpha}, \ \alpha\in\Delta^{re}\cup\Delta^{im+}\rangle\leq\Aut(\widehat{\g}_A).$$
More conceptually, for any field $\KK$, we could define a completion of $\G_A(\KK)$ inside each of the spaces $\Aut(\g_{A\KK})$, $\Aut(L_{\KK}(\lambda))$ and $\Aut(X_+)$.

\begin{definition}\label{definition:completions}
We define the completions of (the image of) $\G_A(\KK)$ inside each the following spaces, with respect to the topology of uniform convergence on bounded sets\footnote{The correct definition of these three completions of $\G_A(\KK)$ is actually a slight modification of the completions defined here, in order to ensure that the torus $T_{\KK}$ of $\G_A(\KK)$ injects in each of them.}:
\begin{enumerate}
\item inside $\Aut(\widehat{\g}_{A\KK})$: this is the {\bf algebraic completion} of $\G_A(\KK)$, denoted $\G_A^{\mathrm{alg}}(\KK)$;
\item inside $\Aut(L_{\KK}(\lambda))$: this is the {\bf representation-theoretic completion} of $\G_A(\KK)$, denoted $\G_A^{\mathrm{rt}\lambda}(\KK)$;
\item
inside $\Aut(X_+)$: this is the {\bf geometric completion} of $\G_A(\KK)$, denoted $\G_A^{\mathrm{geo}}(\KK)$.
\end{enumerate}
The metric on each of the spaces $\widehat{\g}_{A\KK}$, $L_{\KK}(\lambda)$ and $X_+$ that give a sense to ``bounded'' sets (and hence to the topology on their automorphism group) is as follows. For $X_+$, the metric is the chamber distance. Since $\widehat{\g}_{A\KK}$ and $L_{\KK}(\lambda)$ are $\ZZ$-graded vector spaces (for the gradation induced by root height), they are also equiped with a natural metric, in which two vectors are close if their difference is a sum of homogeneous elements of high degree.
\end{definition}

\begin{example}
If $A=\begin{psmallmatrix}2&-2\\ -2&2\end{psmallmatrix}$, then one can check\footnote{up to a difference at the level of the torus.} that the completions of $\G_A(\KK)\cong\SL_2(\KK[t,t\inv])\rtimes\KK^{\times}$ coincide with $\SL_2(\KK(\!(t)\!))\rtimes\KK^{\times}$.
\end{example}

\begin{example}
If $\KK=\FF_q$ is a finite field, then these completions are totally disconnected locally compact (tdlc) groups, which are locally pro-$p$ where $p=\charact(\KK)$.
\end{example}

As for minimal Kac--Moody groups, we would also like to define a more intrinsic completion of $\G_A(\KK)$, i.e. one that does not depend on an ambiant space. With the experience of Section~\ref{section:KMG}, it seems to be a good idea to look at exponentials $\exp(x)$ of homogeneous elements $x\in\g_{\alpha k}$ with $\alpha\in\Delta^{+}$ inside $\widehat{\UU}^+_{k}$, for each ring $k$. 

\begin{prop}
Let $k$ be a ring. For each $\alpha\in\Delta^+$, let $\BBB_{\alpha}$ be a $\ZZ$-basis of $\g_{\alpha\ZZ}:=\g_{\alpha}\cap \UU^+_{\ZZ}$, and fix a total order on $\BBB=\bigcup_{\alpha\in\Delta^+}\BBB_{\alpha}$. Set\footnote{The elements $[\exp](\lambda x)$ are called \emph{twisted exponentials}: as we have seen, when $x\in\BBB_{\alpha}$ with $\alpha\in\Delta^{re+}$, the divided powers $x^n/n!$ belong to $\UU^+_{\ZZ}$ and hence the exponentials $\exp(\lambda x)=\sum_{n\geq 0}\lambda^nx^n/n!$ make sense in $\widehat{\UU}^+_k$. However, this is no longer true if $\alpha\in\Delta^{im+}$: in that case, if $k$ is not of characteristic zero, we have to replace exponentials by these twisted exponentials, which are in some sense the ``best possible approximations'' of exponentials living inside $\widehat{\UU}^+_k$.}
$$\U^{ma+}_A(k):=\Big\{\prod_{x\in\BBB}[\exp](\lambda_x x)\in \widehat{\UU}^+_k \ | \ \lambda_x\in k\Big\}\subseteq \widehat{\UU}^+_k.$$
Then the following assertions hold:
\begin{enumerate}
\item $\U^{ma+}_A(k)$ is a subgroup of $(\widehat{\UU}^+_k)^{\times}$. In fact, the group functor $\U^{ma+}_A\co\Zalg\to\Gr$ is even an affine group scheme.
\item Each element $g\in\U^{ma+}_A(k)$ has a unique expression $g=\prod_{x\in\BBB}[\exp](\lambda_x x)$ with $\lambda_x\in k$.
\item
We have a group morphism $\U^+_A(k)\to\U^{ma+}_A(k):x_{\alpha}(\lambda)\mapsto \exp(\lambda e_{\alpha})$, which is injective if $k$ is a field.
\item
The sets $$\U^{ma}_n(k):=\Big\{\prod_{x\in\BBB, \ \height(\deg(x))\geq n}[\exp](\lambda_x x)\in \widehat{\UU}^+_k \ | \ \lambda_x\in k\Big\}$$ for $n\in\NN$ are normal subgroups of $\U^{ma+}_A(k)$ and form a basis of identity neighbourhoods for a complete Hausdorff group topology on $\U^{ma+}_A(k)$.
\item
If $\charact \KK=0$ or $\charact\KK>M_A:=\max_{i\neq j}|a_{ij}|$, then $\U^+_A(\KK)$ is dense in $\U^{ma+}_A(\KK)$.
\end{enumerate}
\end{prop}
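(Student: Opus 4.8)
The plan is to filter $\U^{ma+}_A(\KK)$ by the normal subgroups $\U^{ma}_n(\KK)$ of assertion (4) and to show that the subgroup $\U^+_A(\KK)$ (which, by assertion (3), we regard as sitting inside $\U^{ma+}_A(\KK)$) surjects onto every finite quotient $\U^{ma+}_A(\KK)/\U^{ma}_n(\KK)$. Since the $\U^{ma}_n(\KK)$ form a basis of identity neighbourhoods, this is exactly density. I would prove the surjectivity by induction on $n$; because the successive quotients $\U^{ma}_n(\KK)/\U^{ma}_{n+1}(\KK)$ turn out to be abelian, the inductive step reduces to realising arbitrary root vectors in $\bigoplus_{\height(\alpha)=n}\g_{\alpha\KK}$ as ``leading terms'' of elements of $\U^+_A(\KK)$, which one does with iterated group commutators of the simple root subgroups. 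The characteristic hypothesis enters exactly once, but crucially.

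For the reduction: by assertion (4), density is equivalent to $\U^{ma+}_A(\KK)=\U^+_A(\KK)\cdot\U^{ma}_n(\KK)$ for all $n$, and since $\U^{ma}_1(\KK)=\U^{ma+}_A(\KK)$ one argues inductively. Given the equality for $n$ and $g\in\U^{ma+}_A(\KK)$, write $g=hu$ with $h\in\U^+_A(\KK)$, $u\in\U^{ma}_n(\KK)$; it then suffices to produce $w\in\U^+_A(\KK)\cap\U^{ma}_n(\KK)$ with $w^{-1}u\in\U^{ma}_{n+1}(\KK)$, for then $g=(hw)(w^{-1}u)\in\U^+_A(\KK)\cdot\U^{ma}_{n+1}(\KK)$. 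Using the uniqueness of expressions (assertion (2)) together with assertion (4), the ``leading term'' map $\prod_{x\in\BBB}[\exp](\lambda_x x)\mapsto\sum_{\height(\deg x)=n}\lambda_x x$ identifies $\U^{ma}_n(\KK)/\U^{ma}_{n+1}(\KK)$ with the abelian group $\bigoplus_{\alpha\in\Delta^+,\,\height(\alpha)=n}\g_{\alpha\KK}$, and likewise $\U^{ma}_n(\KK)=\U^{ma+}_A(\KK)\cap(1+\prod_{\height(\alpha)\ge n}\UU^+_{\alpha\KK})$. So the induction step boils down to the claim that the image of $\U^+_A(\KK)\cap\U^{ma}_n(\KK)$ in $\bigoplus_{\height(\alpha)=n}\g_{\alpha\KK}$ is everything.

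As this target is abelian it is enough to hit a spanning set, and here I would invoke the key input: \emph{when $\charact\KK=0$ or $\charact\KK>M_A$, the Lie $\KK$-algebra $\n^+_{\KK}=\n^+_{\ZZ}\otimes_{\ZZ}\KK=\bigoplus_{\alpha\in\Delta^+}\g_{\alpha\KK}$ is generated by $\{e_i\mid i\in I\}$.} In characteristic $0$ this is clear, as $\n^+$ is by definition generated by the $e_i$ and the relevant $\ZZ$-form sits $\QQ$-rationally inside it; in positive characteristic one must check that the index of the Lie subring of $\n^+_{\ZZ}$ generated by the $e_i$ involves only primes $\le M_A$ — the analysis of divided powers in $\UU^+_{\ZZ}$, which is exactly what fails in characteristic $2$ for the affine $\sll_2$ example of Example~\ref{example:A1tilde1}. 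Granting this, $\bigoplus_{\height(\alpha)=n}\g_{\alpha\KK}$ is spanned over $\KK$ by the right-nested length-$n$ brackets $b=[e_{i_1},[e_{i_2},\dots,[e_{i_{n-1}},e_{i_n}]\cdots]]$ (any bracketing reduces to these by Jacobi). For such a $b$ and $\lambda\in\KK$, the iterated group commutator
$$c:=\big[x_{\alpha_{i_1}}(\lambda),\big[x_{\alpha_{i_2}}(1),\dots,[x_{\alpha_{i_{n-1}}}(1),x_{\alpha_{i_n}}(1)]\cdots\big]\big]\in\U^+_A(\KK)$$
has image in $\widehat{\UU}^+_{\KK}$ lying in $1+\prod_{\height(\alpha)\ge n}\UU^+_{\alpha\KK}$ with leading term $\lambda b$: this follows by iterating the elementary identity $[\exp(a),\exp(a')]\equiv 1+[a,a']$ modulo terms of height exceeding $\height(\deg a)+\height(\deg a')$. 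Combined with the description of $\U^{ma}_n(\KK)$ above, this places $c$ in $\U^+_A(\KK)\cap\U^{ma}_n(\KK)$ with class $\lambda b$, so the image in question contains $\KK b$ for every such $b$ and hence is all of $\bigoplus_{\height(\alpha)=n}\g_{\alpha\KK}$, completing the induction.

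Everything except the Lie-theoretic input is bookkeeping: the filtration argument, the commutator expansion, and the standard fact that right-nested brackets span a generated Lie algebra. The hard part — and the sole reason the statement carries a hypothesis on $\charact\KK$ — is establishing that the $e_i$ generate $\n^+_{\KK}$ when $\charact\KK>M_A$, i.e.\ controlling the primes at which the passage from the enveloping-algebra $\ZZ$-form $\UU^+_{\ZZ}$ to the Lie-algebra $\ZZ$-form $\n^+_{\ZZ}$ introduces torsion; I expect that to be where the real work (or an appeal to the literature) is needed.
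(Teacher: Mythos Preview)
The paper states this proposition without proof: these are minicourse notes, and all proofs are deferred to the companion book cited in the introduction. There is therefore no paper argument to compare against.

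Your proposal addresses only assertion (5), taking (1)--(4) as given; that is the natural reading, since (5) is the only part carrying a hypothesis to exploit. The strategy you outline is correct and is indeed the standard one: filter by height, identify the successive abelian quotients $\U^{ma}_n(\KK)/\U^{ma}_{n+1}(\KK)$ with $\bigoplus_{\height(\alpha)=n}\g_{\alpha\KK}$ via the leading-term map, and then hit every bracket $[e_{i_1},[\dots,e_{i_n}]\dots]$ by the corresponding iterated group commutator of simple root group elements. The identification $\U^{ma}_n(\KK)=\U^{ma+}_A(\KK)\cap\bigl(1+\prod_{\height(\alpha)\ge n}\UU^+_{\alpha\KK}\bigr)$ you use does follow from the unique product expression in (2), and the commutator expansion $[\exp(a),\exp(a')]\equiv 1+[a,a']$ modulo higher height is routine in $\widehat{\UU}^+_{\KK}$.

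You have also correctly located the one nontrivial ingredient: that $\n^+_{\KK}$ is generated as a Lie $\KK$-algebra by the $e_i$ exactly under the hypothesis $\charact\KK=0$ or $\charact\KK>M_A$. This is where the real content lies, and it is the result one must cite or prove; everything else is, as you say, bookkeeping. So your proposal is sound as a proof of (5), with the expected appeal to the literature for the generation statement.
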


We then define an intrinsic completion of $\G_A(k)$ in the spirit of Definition~\ref{definition:constructiveTF}.

\begin{definition}
For $k$ a ring, we define the {\bf scheme-theoretic completion} $\G_A^{\mathrm{sch}}(k)$ of $\G_A(k)$ via a presentation
$$\G_A^{\mathrm{sch}}(k)=\G_A(k)*\U^{ma+}_A(k)/\textrm{(obvious relations)}.$$
We then have a natural inclusion $\G_A(\KK)\hookrightarrow \G_A^{\mathrm{sch}}(\KK)$ and $\overline{\G_A(\KK)}=\G^{\mathrm{sch}}_A(\KK)$ as soon as $\charact \KK=0$ or $\charact\KK>M_A$.
\end{definition}

An analogue of the uniqueness statement for maximal Kac--Moody groups is still out of reach in general, but we nevertheless have the following.

\begin{theorem}\label{thm:GKsimplicity}
\begin{enumerate}
\item The indentity map on $\G_A(\KK)$ induces continuous group morphisms\footnote{There is a subtlety for the second arrow when $0<\charact\KK<M_A$.}
$$\overline{\G_A(\KK)}\to \G_A^{\mathrm{alg}}(\KK)\to\G_A^{\mathrm{rt}\lambda}(\KK)\to \G_A^{\mathrm{geo}}(\KK).$$
\item
If $\charact\KK=0$ and $A$ is symmetrisable, these are isomorphisms of topological groups.
\item
If $\KK=\FF_q$ is a finite field, these are surjective, and are isomorphisms if and only if\footnote{or rather, if and only if $\overline{\G_A(\KK)}$ is GK-simple.} $\G_A^{\mathrm{sch}}(\KK)$ is {\bf GK-simple}, in the sense that the kernel
$$Z'(\G_A^{\mathrm{sch}}(\KK)):=\bigcap_{g\in \G_A^{\mathrm{sch}}(\KK)}g(T_{\KK}\U^{ma+}_A(\KK))g\inv$$
of the $\G_A^{\mathrm{sch}}(\KK)$-action on the building $X_+$ is contained in $T_{\KK}$.
\end{enumerate}
\end{theorem}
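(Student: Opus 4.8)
The plan is to treat the three statements in turn, organising everything around the twin BN-pair and Bruhat decomposition of the Corollary, which persist in each of the four groups with the \emph{same} Weyl group $(W,S)$. For part (1), each of $\overline{\G_A(\KK)}$, $\G_A^{\mathrm{alg}}(\KK)$, $\G_A^{\mathrm{rt}\lambda}(\KK)$, $\G_A^{\mathrm{geo}}(\KK)$ is a complete group containing $\G_A(\KK)$ densely, so producing the comparison maps amounts to checking that the group topology on $\G_A(\KK)$ induced by its action on $\widehat{\UU}^+_\KK$ (equivalently on $\widehat{\g}_{A\KK}$) is finer than the one induced on $L_\KK(\lambda)$, which is finer than the one induced on $X_+$. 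The first refinement is the delicate one: a group element close to $1$ in $\Aut(\widehat{\g}_{A\KK})$ fixes a deep truncation of $\widehat{\n}^+$, and one must deduce that it acts trivially on a correspondingly deep subquotient of $L_\KK(\lambda)$; when $\charact\KK=0$ or $\charact\KK>M_A$ this is immediate from the density of $\U^+_A(\KK)$ in $\U^{ma+}_A(\KK)$, but for $0<\charact\KK<M_A$ the arrow has to be obtained through $\G_A^{\mathrm{sch}}(\KK)$ instead (the footnoted subtlety). The second refinement just says that the fixator of a large combinatorial ball of $X_+$ contains a deep congruence subgroup $\U^{ma}_n(\KK)$, which follows from identifying the chambers near the fundamental one with cosets of the groups $\U^{ma}_n(\KK)$.

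For part (2), $\charact\KK=0$ forces $\overline{\G_A(\KK)}=\G_A^{\mathrm{sch}}(\KK)$ and makes the three neighbourhood bases of $1$ mutually cofinal, so the comparison maps are homeomorphisms as soon as they are bijective. Surjectivity comes from the Bruhat decomposition: since $N=\langle\widetilde s_i,T_\KK\rangle$ already lies in $\G_A(\KK)$, one only needs surjectivity on $B^+$, where it reduces to the fact that the pro-unipotent part of $B^+$ is the image of $\U^{ma+}_A(\KK)$ under a continuous surjection (the twisted exponentials of the $e_\alpha$, $\alpha\in\Delta^+$, generating everything). For injectivity, the kernel of each comparison map is normal and contained in $\bigcap_{g}gB^+g\inv$ (as in the proof of Theorem~\ref{thm:uniquenessTF}, using saturation of the BN-pair), hence inside $Z'(\G_A^{\mathrm{sch}}(\KK))$; and in characteristic zero with $A$ symmetrisable this last group is contained in $T_\KK$, i.e. $\G_A^{\mathrm{sch}}(\KK)$ is GK-simple — a fact that ultimately rests on the Gabber--Kac theorem (Theorem~\ref{thm:GKac}), via $\ker\pi_\lambda\subseteq\mathcal Z(\g_A)$ for $\lambda$ regular dominant. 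Since $T_\KK$ injects into each completion, the kernels vanish.

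For part (3) with $\KK=\FF_q$, every group in the chain is tdlc with open compact pro-$p$ group $B^+$, and surjectivity of the three maps is the Bruhat-decomposition argument above, using that a continuous homomorphism of profinite groups has closed image. The composite $\G_A^{\mathrm{sch}}(\FF_q)\to\G_A^{\mathrm{geo}}(\FF_q)$ is, by construction, induced by the action on $X_+$, so its kernel is $Z'(\G_A^{\mathrm{sch}}(\FF_q))$ up to the torus correction; hence it is an isomorphism iff $Z'(\G_A^{\mathrm{sch}}(\FF_q))\subseteq T_\KK$, i.e. iff $\G_A^{\mathrm{sch}}(\FF_q)$ is GK-simple. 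Granted GK-simplicity, the two intermediate maps are isomorphisms too: their kernels are normal, contained in $\bigcap_{g}gB^+g\inv$, hence in $Z'(\G_A^{\mathrm{sch}}(\FF_q))\subseteq T_\KK$, and $T_\KK$ injects; conversely, if an intermediate map fails to be injective then so does the composite onto $\G_A^{\mathrm{geo}}(\FF_q)$, forcing $Z'\not\subseteq T_\KK$.

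I expect the real work to concentrate on the pro-$p$ group $\U^{ma+}_A(\FF_q)$ and its images, in two places: establishing the comparison of the three topologies on $\G_A(\FF_q)$ when $0<\charact\KK<M_A$, where $\U^+_A(\FF_q)$ is no longer dense in $\U^{ma+}_A(\FF_q)$ and the second comparison map must be built indirectly through $\G_A^{\mathrm{sch}}(\FF_q)$; and verifying that the kernels of all three maps are pinned down by the single condition $Z'(\G_A^{\mathrm{sch}}(\FF_q))\subseteq T_\KK$, i.e. that passing to the finer completions $\G_A^{\mathrm{alg}}$ and $\G_A^{\mathrm{rt}\lambda}$ detects nothing beyond what the building $X_+$ already detects.
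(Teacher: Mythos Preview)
The paper does not supply a proof of Theorem~\ref{thm:GKsimplicity}: it is stated as a result, with the details deferred to the referenced book (see the preamble of the notes, which explains that ``attributions of all mentioned results can be found at the end of Chapters 6, 7 and 8, and in Chapter 9 of the book''). There is therefore nothing in the present text to compare your argument against line by line.

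That said, your outline is consistent with the machinery the notes put in place. You correctly route the injectivity questions through the saturated BN-pair and the identity $\bigcap_g gB^+g\inv=T_\KK$ exactly as in the sketch of Theorem~\ref{thm:uniquenessTF}; you correctly tie part~(2) to the Gabber--Kac theorem (Theorem~\ref{thm:GKac}) via faithfulness of $\pi_\lambda$ modulo centre for regular dominant $\lambda$; and your reading of part~(3) --- that all three kernels are governed by the single invariant $Z'(\G_A^{\mathrm{sch}}(\FF_q))$ --- matches the formulation of GK-simplicity given in the statement and the surrounding discussion in \S\ref{subsection:Injectivity}--\S\ref{subsection:isompb}. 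You also flag the genuine difficulty at the right place: the comparison of topologies (equivalently, the construction of the second arrow) when $0<\charact\KK\le M_A$, where density of $\U^+_A(\KK)$ in $\U^{ma+}_A(\KK)$ fails and one must work through $\G_A^{\mathrm{sch}}(\KK)$ directly. One small caution: your claim that in characteristic zero the three neighbourhood bases are ``mutually cofinal'' is stronger than what you actually need and is not obvious a priori; what suffices is that each map is continuous with closed image and trivial kernel, and openness then follows from the BN-pair structure rather than from a direct cofinality argument.
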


\begin{remark}\label{remark:indgroupscheme}
One can also construct a {\bf (positive) maximal Kac--Moody ind-group scheme\footnote{that is, $\G_A^{\mathrm{pma}}$ is a group functor which is an inductive limit of affine schemes.}} $\G_A^{\mathrm{pma}}\co\Zalg\to\Gr$ such that
\begin{enumerate}
\item
there is a natural morphism $\G_A^{\mathrm{sch}}\to\G_A^{\mathrm{pma}}$ of group functors which is an isomorphism of topological groups over fields.
\item
if $A$ is a Cartan matrix, then $\G_A^{\mathrm{pma}}$ is the Chevalley--Demazure affine group scheme of type $A$.
\end{enumerate}
\end{remark}

\section{Open questions}\label{section:openpb}
We collect here a few open questions pertaining to the foundations of the theory. More details can be found in Chapter~8 and 9 of the book.

\subsection{Injectivity of the Tits functor}\label{subsection:Injectivity}

First, as noted in Remark~\ref{remark:CTFTF}, it is unclear how far the constructive Tits functor is from being a Tits functor because of the axiom (KMG4). This leads to the following question.

\begin{question}
Given a domain $k$ with field of fractions $\KK$, when is the natural map $\G_A(k)\to\G_A(\KK)$ injective? 
\end{question}

This question can also be stated for arbitrary rings as follows:

\begin{question}
Given a ring $k$, when is the natural map $\G_A(k)\to\G^{\mathrm{pma}}_A(k)$ injective? 
\end{question}

For $A$ of finite type, a lot of work has been done in that direction (the keywords being ``K2-theory for Chevalley groups'').

\subsection{GK-simplicity}
As we saw, having a uniqueness statement for maximal Kac--Moody groups over fields $\KK$ (at least when $\charact \KK=0$ or $\charact\KK>M_A$) essentially amounts to establishing that $\G_A^{\mathrm{sch}}(\KK)$ is GK-simple.

\begin{question}
When is $\G_A^{\mathrm{sch}}(\KK)$ GK-simple?
\end{question}

Note that when $0<\charact\KK<M_A$, this is in general false, but the hope is that $\G_A^{\mathrm{sch}}(\KK)$ would be GK-simple whenever $\charact\KK>M_A$.

When $\charact\KK=0$, the group $\G_A^{\mathrm{sch}}(\KK)$ is GK-simple if $A$ is symmetrisable by Theorem~\ref{thm:GKsimplicity}(2). For $A$ non-necessarily symmetrisable, this is equivalent to proving the Gabber--Kac simplicity Theorem~\ref{thm:GKac} for $A$ (whence the terminology ``Gabber--Kac simple'' or ``GK-simple'').

When $\KK=\FF_q$ is a finite field, GK-simplicity amounts to all the completions of $\G_A(\KK)$ being isomorphic (see Theorem~\ref{thm:GKsimplicity}(3)).

Beyond aiming for a uniqueness statement for maximal Kac--Moody groups over fields, there is another motivation to study the GK-simplicity of $\G_A^{\mathrm{sch}}(\KK)$: it amounts to understand the kernel of the map
$$\G_A^{\mathrm{sch}}(\KK)\to\G^{\mathrm{geo}}_A(\KK)\leq\Aut(X_+),$$
and hence the difference between, on the one hand, the group $\G^{\mathrm{geo}}_A(\KK)$ which has the nice property of being simple (see \S\ref{subsection:simplicity}), and on the other hand the group $\G_A^{\mathrm{sch}}(\KK)$ whose fine algebraic structure is much more easily accessed thanks to the connection between $\U^{ma+}_A(\KK)$ and $\g_{A\KK}$. In practise, this means that the GK-simplicity question is often in the way when trying to prove fundamental properties of maximal Kac--Moody groups (see also \S\ref{subsection:linearity} and \ref{subsection:isompb} for illustrations of this).

On the other hand, when $0<\charact\KK<M_A$, all hell seems to break loose (non-GK simplicity, non-density of $\G_A(\KK)$ in $\G_A^{\mathrm{sch}}(\KK)$, exceptional isomorphisms, etc), and it would be nice to understand what exactly is going on.

\begin{question}
What is happening in small characteristic???
\end{question}

\subsection{Linearity}\label{subsection:linearity}
A group $G$ is \emph{linear} if there exists a group morphism $\varphi\co G\to\GL_n(F)$ for some field $F$, with central kernel. If $A$ is of finite or affine type, then the groups $\G_A(\KK)$ and $\G_A^{\mathrm{sch}}(\KK)$ are linear. On the other hand, as soon as $A$ is of indefinite type, the group $\G_A(\KK)$ is not linear (with some exceptions for $\KK=\FF_q$ and $|I|=2$). The following question, on the other hand, is still open:

\begin{question}
Suppose $A$ is of indefinite type. When are the groups $\U^+_A(\KK)$ and $\U^{ma+}_A(\KK)$ (non-)linear?
\end{question}

For instance, it is a long-standing open question whether $\U^{ma+}_A(\FF_q)$ can be linear over a local field $F$. Note that if $\G_A^{\mathrm{sch}}(\FF_q)$ were known to be GK-simple, then this question can be answered (by the negative, for $A$ of indefinite type).

\subsection{Simplicity}\label{subsection:simplicity}
Suppose that $A$ is indecomposable.
It is known that $\G_A^{\mathrm{sch}}(\KK)$ is (abstractly) simple modulo its Gabber--Kac kernel for most fields $\KK$ (including fields of characteristic zero and finite fields). For the minimal Kac--Moody group $\G_A(\KK)$, things are more complicated: it is known that $\G_A(\FF_q)$ is simple modulo center provided $A$ is not of affine type (and with additional mild assumptions on $|I|$ and $q$), but the following question remains open:

\begin{question}
Suppose $A$ is of indefinite indecomposable type. Is $\G_A(\KK)$ simple modulo centre when $\KK$ is a field of characteristic zero?
\end{question}

\subsection{Isomorphism problem}\label{subsection:isompb}
Another natural question is the isomorphism problem for Kac--Moody groups. The minimal Kac--Moody group $\G_A(\KK)$ determines $A$ when $\KK$ has characteristic zero, but when $\KK=\FF_q$ is a finite field, there are exceptional isomorphisms and $\G_A(\KK)$ turns out to carry very little information about $A$. 

One can ask a similar question for maximal Kac--Moody groups:

\begin{question}
Does the topological group $\G_A^{\mathrm{sch}}(\KK)$ determine $A$?
\end{question}

As the exceptional isomorphisms between minimal Kac--Moody groups over $\FF_q$ extend to topological isomorphisms of the corresponding geometric completions, it is important to consider the scheme-theoretic completion $\G_A^{\mathrm{sch}}(\KK)$ rather than another completion of $\G_A(\KK)$, as it is the completion that seems to carry the most information about $A$, even over finite fields.

Again, partial results to answer the above question can be obtained if we knew the groups $\G_A^{\mathrm{sch}}(\KK)$ were GK-simple.

\subsection{Characterisations of Kac--Moody groups}
Finally, as mentioned in Section~\ref{section:KMA}, there are no known ``realisations'' of Kac--Moody algebras (and hence groups) when $A$ is of indefinite type. It would be nice to be able to give examples of Kac--Moody groups of indefinite type, in the same way one can give $\SL_n(\KK[t,t\inv])$ and $\SL_n(\KK(\!(t)\!))$ as examples of Kac--Moody groups of affine type. More importantly, we would like to be able to characterise Kac--Moody groups as ``building blocks'' of various theories, thereby underlining their fundamental nature. This raises the following question, which we then decline in three sub-questions respectively pertaining to the theories of schemes, locally compact groups, and buildings.

\begin{question}
Can we characterise (a class of) Kac--Moody groups without reference to Kac--Moody algebras?
\end{question}

It is known that the Chevalley--Demazure affine group schemes (which maximal Kac--Moody ind-group schemes generalise by Remark~\ref{remark:indgroupscheme}(2)) are precisely the (split, semisimple) affine group schemes over $\ZZ$. 

\begin{question}
Are maximal Kac--Moody ind-group schemes the (\emph{suitable adjectives}) ind-group schemes over $\ZZ$?
\end{question}

The quotient of $\G_A^{\mathrm{sch}}(\FF_q)$ by its Gabber--Kac kernel belongs to the class $\mathscr{S}$ of compactly generated, (topologically) simple, non-discrete tdlc groups. This class $\mathscr{S}$ plays a fundamental role in the general structure theory of locally compact groups (with suitable non-discreteness assumptions). Here are two facts about $\mathscr{S}$\footnote{For more details, we refer to the survey paper ``\href{http://dx.doi.org/10.4171/176-1/15}{Non-discrete simple locally compact groups}'' by Pierre-Emmanuel Caprace.}:
\begin{enumerate}
\item
To each group $G$ in $\mathscr{S}$ one can attach in a canonical way a compact space $\Omega_G$ on which $G$ acts nicely.
\item
The only known examples of groups $G$ in $\mathscr{S}$ such that $\Omega_G$ is trivial (i.e. reduced to a point) are simple algebraic groups over local fields and (conjecturally) locally compact Kac--Moody groups (namely, maximal Kac--Moody groups over finite fields).
\end{enumerate}
In other words, locally compact Kac--Moody groups are the only known examples (at least conjecturally) of groups $G$ in $\mathscr{S}$ that are non-linear and with trivial $\Omega_G$.

\begin{question}
Can we characterise locally compact Kac--Moody groups within the class $\mathscr{S}$?
\end{question}

Finally, recall that any minimal Kac--Moody group $\G_A(\KK)$ acts on a twin building $(X_+,X_-)$. When $A$ is \emph{$2$-spherical} (i.e. $a_{ij}a_{ji}\leq 3$ for all $i\neq j$), the automorphism group $\Aut(X_+,X_-)$ (that is, the group of automorphisms of $X_+\times X_-$ preserving the twinning) is not ``too big'' compared to $\G_A(\KK)$ (or rather, it is too big if $A$ is not $2$-spherical).

\begin{question}
Assume that $A$ is $2$-spherical, with corresponding Weyl group $(W,S)$. Can we characterise minimal Kac--Moody groups $\G_A(\KK)$ as automorphism groups of twin buildings of type $(W,S)$ (with suitable extra assumptions) ?
\end{question}
 
The answer to this last question is probably already known.\footnote{See the paper ``\href{http://dx.doi.org/10.1017/CBO9780511629259.023}{Twin buildings and groups of Kac--Moody type}'' by Jacques Tits, as well as subsequent work of Bernhard Mühlherr.}

\bibliographystyle{amsalpha} 
\bibliography{overviewKM}

\end{document}